\documentstyle[amsfonts,amssymb,amsthm,amsmath, xcolor, 12pt]{article}

\title{On spaces with a $\pi$-base whose elements have an H-closed closure}
\author{Davide Giacopello}
\date{}

\newtheorem{theorem}{Theorem}[section]
\newtheorem{corollary}[theorem]{Corollary}
\newtheorem{question}[theorem]{Question}
\newtheorem{example}[theorem]{Example}
\newtheorem{lemma}[theorem]{Lemma}

\begin{document}
\maketitle
\begin{abstract}
	We deal with the class of Hausdorff spaces having a $\pi$-base whose elements have an H-closed closure. Carlson proved that $|X|\leq 2^{wL(X)\psi_c(X)t(X)}$ for every quasiregular space $X$ with a $\pi$-base whose elements have an H-closed closure. We provide an example of a space $X$ having a $\pi$-base whose elements have an H-closed closure which is not quasiregular (neither Urysohn) such that $|X|> 2^{wL(X)\chi(X)}$ (then $|X|> 2^{wL(X)\psi_c(X)t(X)}$). Still in the class of spaces with a $\pi$-base whose elements have an H-closed closure, we establish the bound $|X|\leq2^{wL(X)k(X)}$ for Urysohn spaces and we give an example of an Urysohn space $Z$ such that $k(Z)<\chi(Z)$. Lastly, we present some equivalent conditions to the Martin's Axiom involving spaces with a $\pi$-base whose elements have an H-closed closure and, additionally, we prove that if a quasiregular space has a $\pi$-base whose elements have an H-closed closure then such space is Baire.
\end{abstract}

{\bf Keywords:} H-closed spaces, $\pi$-bases, quasiregular spaces, Urysohn spaces, cardinal bounds, cardinal inequalities, Baire spaces.
 
\bigskip 

{\bf AMS Subject Classification:} Primary 54A25, Secondary 54D10, 54D20.

\section{Introduction}
Throughout the paper we mean by \lq\lq space\rq\rq, an Hausdorff topological space. And for semplicity if a space has a $\pi$-base whose elements have an H-closed (compact) closure, we say that the space has an H-closed (compact) $\pi$-base.
In 1978, Bell, Ginsburg, and Woods established, in \cite{BGW}, that if a space $X$ is normal, then $|X|\leq 2^{wL(X)\chi(X)}$, and they constructed an example of a Hausdorff space in which the aforementioned inequality does not hold. In the same paper, they posed the question of whether the hypothesis of normality could be weakened in some way. In \cite{DP}, Dow and Porter achieved a powerful result: $|X|\leq 2^{\psi_c(X)}$ for every H-closed space $X$. 
In \cite{BC}, Bella and Carlson proved that the inequality $|X|\leq 2^{wL(X)t(X)\psi(X)}$ is true for every regular space $X$ with a compact $\pi$-base.
In \cite{BCG1}, Bella, Carlson and Gotchev proved that almost the same inequality (with the closed pseudocharacter $\psi_c(X)$ instead of the pseudocharacter $\psi(X)$) holds also for spaces with a compact $\pi$-base. That is,

\begin{theorem}\rm\cite{BCG1}\label{BCG1}
	If $X$ is a space with a compact $\pi$-base. Then $|X|\leq 2^{wL(X)t(X)\psi_c(X)}$.
\end{theorem}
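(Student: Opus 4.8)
Set $\kappa=wL(X)t(X)\psi_c(X)$ and, using $\psi_c(X)\le\kappa$, fix once and for all for each $y\in X$ a family $\{V(y,\alpha):\alpha<\kappa\}$ of open neighbourhoods of $y$ with $\bigcap_{\alpha<\kappa}\overline{V(y,\alpha)}=\{y\}$. The plan is a closing-off argument: I will build a closed set $H\subseteq X$ with $|H|\le 2^{\kappa}$ and show $H=X$. First I would record three preparatory facts. \textbf{(i)} If $A\subseteq X$ with $|A|\le\kappa$, then $|\overline A|\le 2^{\kappa}$: the map $y\mapsto\bigl(\alpha\mapsto V(y,\alpha)\cap A\bigr)$ is injective on $\overline A$, because if $y,z\in\overline A$ induce the same function then, using $y\in\overline{V(y,\alpha)\cap A}$ for every $\alpha$, one gets $y\in\bigcap_{\alpha}\overline{V(z,\alpha)}=\{z\}$; the codomain has size $(2^{\kappa})^{\kappa}=2^{\kappa}$. \textbf{(ii)} A Hausdorff space with a compact $\pi$-base is quasiregular, and in fact every nonempty open $U$ contains $\overline B$ for some $B\in\mathcal B$: pick $B'\in\mathcal B$ with $B'\subseteq U$; since $\overline{B'}$ is compact Hausdorff, hence regular, and $B'$ is open in it, some nonempty $V$ open in $X$ has $\overline V\subseteq B'\subseteq U$; now take $B\in\mathcal B$ with $B\subseteq V$. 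Call this fact $(\star)$. \textbf{(iii)} If $K$ is compact and $x\notin K$, then $\{X\setminus\overline{V(x,\alpha)}:\alpha<\kappa\}$ covers $K$, so by compactness there is a finite $F\subseteq\kappa$ with $\overline{\bigcap_{\beta\in F}V(x,\beta)}\cap K=\emptyset$; thus $x$ has a neighbourhood of the special form $\bigcap_{\beta\in F}V(x,\beta)$ whose closure misses $K$.

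Next I would construct an increasing continuous chain $\langle H_\alpha:\alpha<\kappa^{+}\rangle$ of subsets of $X$, starting from a singleton, with $|H_\alpha|\le 2^{\kappa}$, arranging at each successor step that $H_{\alpha+1}$ contains: \textbf{(b)} $\overline A$ for every $A\in[H_\alpha]^{\le\kappa}$ (legitimate by (i)); and \textbf{(c)} for every $A\in[H_\alpha]^{\le\kappa}$ and every $F\colon A\to[\kappa]^{<\omega}$ for which $U_{A,F}:=X\setminus\overline{\bigcup_{x\in A}\bigcap_{\beta\in F(x)}V(x,\beta)}$ is nonempty, a set $\overline{B_{A,F}}$ with $B_{A,F}\in\mathcal B$ chosen (by $(\star)$) so that $\overline{B_{A,F}}\subseteq U_{A,F}$. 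Each $\overline{B_{A,F}}$ has cardinality at most $2^{\kappa}$, being a compact Hausdorff space of closed pseudocharacter $\le\kappa$; routine cardinal arithmetic ($\kappa^{\kappa}=(2^{\kappa})^{\kappa}=2^{\kappa}$, and $\kappa^{+}\le 2^{\kappa}$) then gives $|H_\alpha|\le 2^{\kappa}$ throughout, hence $|H|\le 2^{\kappa}$ for $H:=\bigcup_{\alpha<\kappa^{+}}H_\alpha$; and (b) together with $t(X)\le\kappa$ and the regularity of $\kappa^{+}$ shows $H$ is closed.

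It remains to prove $H=X$. Suppose not. Then $X\setminus H$ is nonempty and open, so by $(\star)$ there is $B_0\in\mathcal B$ with the compact set $\overline{B_0}$ contained in $X\setminus H$. For each $x\in H$ we have $x\notin\overline{B_0}$, so by (iii) we may fix a finite $F(x)\subseteq\kappa$ so that $W_x:=\bigcap_{\beta\in F(x)}V(x,\beta)$ is a neighbourhood of $x$ with $\overline{W_x}\cap\overline{B_0}=\emptyset$; in particular $W_x\cap B_0=\emptyset$. Then $\{W_x:x\in H\}\cup\{X\setminus H\}$ is an open cover of $X$, so by $wL(X)\le\kappa$ there is $S\in[H]^{\le\kappa}$ such that, with $C:=\bigcup_{x\in S}W_x$, we have $\operatorname{int}(H)\subseteq\overline C$ (since $\overline{X\setminus H}=X\setminus\operatorname{int}(H)$, this holds whether or not the term $X\setminus H$ is needed in the chosen subfamily). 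Because $B_0$ is open and disjoint from $C$, it is disjoint from $\overline C$, so $B_0\subseteq U_{S,F\restriction S}$ and this set is nonempty. Fixing $\alpha<\kappa^{+}$ with $S\subseteq H_\alpha$, clause (c) at stage $\alpha+1$ therefore put some $\overline{B_{S,F\restriction S}}\subseteq H$ with $B_{S,F\restriction S}$ a nonempty open subset of $X\setminus\overline C$; but $B_{S,F\restriction S}\subseteq H$ and $B_{S,F\restriction S}$ open forces $B_{S,F\restriction S}\subseteq\operatorname{int}(H)\subseteq\overline C$, a contradiction. Hence $H=X$ and $|X|\le 2^{\kappa}$.

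The delicate point — and the one where compactness of the $\pi$-base elements is essential — is this last step. Separating each $x\in H$ from a single point $q\in X\setminus H$ and hoping that $q\notin\overline{\bigcup_{x}W_x}$ fails, since closures do not commute with infinite unions; replacing the point by the whole compact set $\overline{B_0}$ (placed inside $X\setminus H$ via $(\star)$, and separated off using (iii) with only finitely many of the $V(x,\beta)$) forces the union $\bigcup_{x\in S}W_x$, and hence its closure, to miss the honest open set $B_0$, after which the closing-off clause (c) delivers the contradiction. A secondary point to get right is clause (b) together with fact (i): these are what keep the chain of size $2^{\kappa}$ while ensuring $H$ is closed.
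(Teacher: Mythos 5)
Your proof is correct. The paper does not actually prove Theorem \ref{BCG1} (it is quoted from \cite{BCG1}), but your argument is essentially the same closing-off scheme that the paper carries out for the more general Theorem \ref{HclosedQR}: the only minor differences are that you separate each $x\in H$ from the compact set $\overline{B_0}$ by a finite subcover of $\{X\setminus\overline{V(x,\alpha)}\}$ where the paper uses convergence of an open ultrafilter on the H-closed closure, you keep the chain small via your fact (i) where the paper invokes the Bella--Cammaroto inequality, and you add $X\setminus H$ to the cover and argue with $\operatorname{int}(H)\subseteq\overline{C}$ where the paper covers the regular-closed set $F$ and uses that $wL$ is hereditary on regular-closed sets.
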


Since H-closedness is a natural generalization of compactness, Bella, Carlson and Gotchev posed the following question. 

\begin{question}\rm \cite{BCG1}\label{BCG} 
	Let $X$ be a space with a $\pi$-base whose elements have H-closed closures. Is it true that $|X|\leq 2^{wL(X)t(X)\psi_c(X)}$?
\end{question}

In \cite{BC}, some further investigations on spaces having a $\pi$-base with some properties on the closure of the elements have led to the following result.

\begin{theorem}\rm\cite{BC}\label{BC}
	Let $X$ be a space and ${\cal B}$ an open $\pi$-base. Suppose for all $B \in {\cal B}$ that $\overline{B}$ is $H$-closed, (or normal, or Lindelöf, or has the ccc). Then $d_\theta(X) \leq 2^{w L(X) \chi(X)}$ and if $X$ is quasiregular or Urysohn then $|X| \leq 2^{w L(X) \chi(X)}$.
\end{theorem}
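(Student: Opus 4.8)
Write $\kappa=wL(X)\chi(X)$. The plan is to run a single Pol--\v{S}apirovski\u{\i}-style closing-off producing a set $H\subseteq X$ with $|H|\le 2^{\kappa}$, to show that $H$ is $\theta$-dense (so $d_\theta(X)\le 2^{\kappa}$), and to show that $H=X$ when $X$ is quasiregular or Urysohn (so $|X|\le 2^{\kappa}$). The hypothesis on $\mathcal B$ is used only through the cardinality estimate it gives for each $B\in\mathcal B$: for H-closed $\overline B$ this is the Dow--Porter bound $|\overline B|\le 2^{\psi_c(\overline B)}\le 2^{\chi(X)}\le 2^{\kappa}$, and the normal, Lindel\"of and ccc cases run identically with the corresponding inequality $|\overline B|\le 2^{wL(\overline B)\chi(\overline B)}$. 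I describe the H-closed case.

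Fix for each $x\in X$ an open neighbourhood base $\mathcal V_x$ with $|\mathcal V_x|\le\chi(X)$, and build an increasing continuous chain $\langle H_\alpha:\alpha<\kappa^{+}\rangle$ of subsets of $X$ of cardinality $\le 2^{\kappa}$. At a successor step adjoin to $H_\alpha$: (a) for every $\mathcal A\in\big[\bigcup_{x\in H_\alpha}\mathcal V_x\big]^{\le\kappa}$ with $\overline{\bigcup\mathcal A}\ne X$, all of $X\setminus\overline{\bigcup\mathcal A}$ if this set has size $\le 2^{\kappa}$ and one point of it otherwise; and (b) for each of the (at most $2^{\kappa}$) possible traces $\overline B\cap H_\alpha$ with $B\in\mathcal B$ and $\overline B$ meeting $\bigcup_{x\in H_\alpha}\bigcup\mathcal V_x$, the whole set $\overline B$ for one such $B$ (legitimate since $|\overline B|\le 2^{\kappa}$). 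There are at most $(2^{\kappa})^{\kappa}=2^{\kappa}$ families in (a) and at most $2^{\kappa}$ sets $\overline B$ in (b), each of size $\le 2^{\kappa}$, so $|H_{\alpha+1}|\le 2^{\kappa}$; take unions at limits and put $H=\bigcup_{\alpha<\kappa^{+}}H_\alpha$, so $|H|\le 2^{\kappa}$.

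To see $\operatorname{cl}_\theta(H)=X$, suppose $p\in X\setminus\operatorname{cl}_\theta(H)$: choose open $U\ni p$ with $\overline U\cap H=\emptyset$ and, since $\mathcal B$ is a $\pi$-base, $B\in\mathcal B$ with $\overline B\subseteq\overline U$, so $\overline B$ is H-closed and disjoint from $H$. For each $x\in H$ pick $V_x\in\mathcal V_x$ with $V_x\cap\overline B=\emptyset$; then $\bigcup_{x\in H}V_x$ contains $H$ but misses the nonempty open set $B$, hence is not dense. Extend $\{V_x:x\in H\}$ to an open cover of $X$ by adjoining an open cover of the closed set $X\setminus\bigcup_{x\in H}V_x$ drawn from $\{\operatorname{int}\overline{B'}:B'\in\mathcal B\}$; by (b) every $\overline{B'}$ used was already absorbed into $H$. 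Applying $wL(X)\le\kappa$ gives a subfamily of size $\le\kappa$ with dense union; by regularity of $\kappa^{+}$ its part of the form $\{V_x:x\in H_0\}$ lies inside $\bigcup_{x\in H_\alpha}\mathcal V_x$ for some $\alpha<\kappa^{+}$, and this part still misses $B$, so has non-dense union; then a point adjoined at stage $\alpha+1$ under (a) lies outside $\overline{\bigcup_{x\in H_0}V_x}$ but inside some $V_x\subseteq\bigcup_{x\in H}V_x$ --- contradiction. Thus $d_\theta(X)\le 2^{\kappa}$. If in addition $X$ is Urysohn, an H-closed subspace of $X$ is $\theta$-closed, so each point of $X=\operatorname{cl}_\theta(H)$ lies in some $\overline B$ with $B\in\mathcal B$ and $\overline B\cap H\ne\emptyset$; since (b) placed all such $\overline B$ into $H$, we get $H=X$, hence $|X|\le 2^{\kappa}$. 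The quasiregular case is analogous, using in addition that every nonempty open subset of a quasiregular space contains the closure of a $\pi$-base element.

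The main obstacle, I expect, is the extension of $\{V_x:x\in H\}$ to a genuine open cover of $X$ in the proof that $\operatorname{cl}_\theta(H)=X$: since $wL$ controls only covers, one must cover the complement $X\setminus\bigcup_{x\in H}V_x$ by $\pi$-base-derived sets that the closing-off has already anticipated, without the number of objects to anticipate exceeding $2^{\kappa}$. This is exactly where the hypothesis that each $\overline B$ is H-closed (or normal, Lindel\"of, ccc), hence of size $\le 2^{\kappa}$, is indispensable, and it is presumably the step that breaks down for the non-quasiregular, non-Urysohn Hausdorff space with $|X|>2^{wL(X)\chi(X)}$ promised in the introduction.
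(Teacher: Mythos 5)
This theorem is quoted from \cite{BC} and not reproved in the paper; the nearest in-text model is the proof of Theorem \ref{HclosedQR}, which runs the same kind of closing-off. Measured against that, your proposal has the right overall shape (closing-off of length $\kappa^+$, cardinality of each $\overline B$ controlled by Dow--Porter or its normal/Lindel\"of/ccc analogues, $wL$ used to produce a small subfamily), but the step where $wL(X)$ is actually applied does not work. The family $\{V_x:x\in H\}$ covers only $H$, and you propose to upgrade it to an open cover of $X$ by covering the closed set $X\setminus\bigcup_{x\in H}V_x$ with sets $\mathrm{int}(\overline{B'})$, $B'\in\mathcal B$. But the union of a $\pi$-base, and likewise $\bigcup_{B'\in\mathcal B}\mathrm{int}(\overline{B'})$, is merely \emph{dense} in $X$; there is no reason it contains the closed set you need to cover. (In Example \ref{exe} of this paper --- a Hausdorff space with an H-closed $\pi$-base, to which the $d_\theta$ clause of the theorem applies --- the union of the natural $\pi$-base and of the interiors of its closures misses every irrational point.) The device that makes such arguments work, and the reason the chain in Theorem \ref{HclosedQR} consists of \emph{open} sets $U_\alpha$, is that $F=\overline{\bigcup_\alpha U_\alpha}$ is regular-closed, $wL$ is hereditary on regular-closed sets, and $\{V_x:x\in F\}$ covers all of $F$; one extracts a $\le\kappa$-sized subfamily whose union is dense in $F$ without ever producing a cover of $X$. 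Your $H$ is not arranged to be (contained in) such a regular-closed set, so this tool is unavailable, and the global-cover substitute does not exist.

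Two further steps are also broken. In clause (b), the number of distinct traces $\overline B\cap H_\alpha$ is bounded only by $2^{|H_\alpha|}=2^{2^\kappa}$ (or by $|\mathcal B|$), not by $2^\kappa$, so adjoining one $\overline B$ per trace can push $|H_{\alpha+1}|$ past $2^\kappa$ and the induction collapses; and independently of the counting, there is no reason the particular $B'$ needed to cover $X\setminus\bigcup_{x\in H}V_x$ would have been anticipated by a condition formulated in terms of $H_\alpha$. In the Urysohn endgame, from $X=\mathrm{cl}_\theta(H)$ you infer that every point of $X$ lies in some $\overline B\subseteq H$: even granting that H-closed subspaces of Urysohn spaces are $\theta$-closed (which is true), $\theta$-closure is not additive over the $2^\kappa$ many pieces of $H$, so this is a non sequitur. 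The in-paper route for the Urysohn case is quite different and cleaner: pass to the semiregularization as in Lemma \ref{pi-base} and Theorem \ref{k}, where $X_s$ has a compact $\pi$-base and $|X|=|X_s|\le 2^{wL(X_s)\chi(X_s)}\le 2^{wL(X)\chi(X)}$. One point in your plan that is sound: since the full character is assumed here, a neighbourhood base at $x$ separates $x$ from the closed set $\overline B$ directly, so H-closedness (or normality, etc.) is indeed needed only for the estimate $|\overline B|\le 2^\kappa$; the open-ultrafilter argument of Theorem \ref{HclosedQR} is only required when $\chi$ is weakened to $\psi_c t$.
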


Then we pose the following questions.

\begin{question}\rm \label{quasi}
	Does the inequality $|X| \leq 2^{wL(X) t(X) \psi_c(X)}$ hold for quasiregular spaces having an H-closed $\pi$-base? 
\end{question}
\begin{question}\rm \label{ury}
	Does the inequality $|X| \leq 2^{wL(X) t(X) \psi_c(X)}$ hold for Urysohn spaces having an H-closed $\pi$-base? 
\end{question}

Recall that the Banach-Mazur game on the space $X$ is played by two players {\sc Alice} and {\sc Bob} in $\omega$-many innings. At the beginning of the game, {\sc Alice} chooses a nonempty open set $U_0$ and {\sc Bob} responds by choosing a nonempty open set $V_0\subset U_0$. At the $n$-th inning ($n>0$), {\sc Alice} chooses a nonempty $U_n\subset V_{n-1}$ and {\sc Bob} responds by choosing a nonempty open set $V_n\subset U_n$, and so on. The player {\sc Bob} wins if and only if $\bigcap _{n\in\omega}V_n\neq\emptyset$.
Banach, Mazur and Oxtoby proved that the spave $X$ has the Baire property if and only if {\sc Alice} does not have a winning strategy in the Banach-Mazur game on $X$. 
A space $X$ is said Choquet if {\sc Bob} has a winning strategy in the Banach-Mazur game on $X$. Choquet spaces were introduced in 1975 by White who called them weakly $\alpha$-favorable spaces. Choquet spaces are Baire. A Bernstein subset of reals witnesses that Baire spaces need not be Choquet.\\
Bella, Carlson and Gotchev proved the following result.

\begin{theorem}\rm\cite{BCG2}
A space $X$ with a $\pi$-base whose elements have closures that are compact is Choquet.
\end{theorem}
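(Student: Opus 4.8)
The plan is to produce, explicitly, a winning strategy for {\sc Bob}. The guiding observation is that, after his very first move, {\sc Bob} can confine the rest of the play to a fixed set with compact closure; a compact Hausdorff space is regular, so inside that set {\sc Bob} can shrink with control on closures, and compactness then delivers a point of the intersection.

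In detail: fix a $\pi$-base $\mathcal{B}$ all of whose members have compact closure. When {\sc Alice} opens with $U_0$, {\sc Bob} uses that $\mathcal{B}$ is a $\pi$-base to choose $B_0\in\mathcal{B}$ with $\emptyset\neq B_0\subseteq U_0$, and plays $V_0=B_0$. Put $K=\overline{B_0}$; this is compact Hausdorff, hence regular, and every subsequent set played by either player is a nonempty open subset of $B_0$, hence of $K$. Here I would first record two routine facts that make the subspace $K$ interchangeable with the ambient space on subsets of $B_0$: a set open in $K$ and contained in $B_0$ is open in $X$, and for $W\subseteq K$ the closure of $W$ in $K$ equals its closure in $X$ (because $K$ is closed in $X$).

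For the recursive step, suppose {\sc Alice} has just played $U_{n+1}$ with $\emptyset\neq U_{n+1}\subseteq V_n\subseteq B_0$. Then $U_{n+1}$ is open in $K$; picking $x\in U_{n+1}$ and applying regularity of $K$, {\sc Bob} obtains $V_{n+1}$, open in $K$ and so open in $X$, with $x\in V_{n+1}$ and $\overline{V_{n+1}}\subseteq U_{n+1}$. This $V_{n+1}$ is a legal reply, $\overline{V_{n+1}}$ is a closed subset of the compact set $K$ and hence compact, and $\overline{V_{n+1}}\subseteq U_{n+1}\subseteq V_n$.

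Finally I would check that this strategy wins: along any play following it, $\{\overline{V_n}\}_{n\in\omega}$ is a decreasing sequence of nonempty compact sets, so $\bigcap_{n\in\omega}\overline{V_n}\neq\emptyset$; and from $\overline{V_{n+1}}\subseteq V_n\subseteq\overline{V_n}$ for every $n$ one gets $\bigcap_{n\in\omega}\overline{V_n}=\bigcap_{n\in\omega}V_n$, so $\bigcap_{n\in\omega}V_n\neq\emptyset$ and {\sc Bob} wins. The only genuine difficulty is that $X$ need not be regular, so {\sc Bob} cannot in general shrink an open set while controlling its closure; the remedy is exactly the retreat into the compact closure $K=\overline{B_0}$ after move $0$, the rest being the bookkeeping of matching the topology and closures of $K$ with those of $X$ on subsets of $B_0$.
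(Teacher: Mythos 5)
Your proof is correct, and it follows the standard argument for this result (which the paper only cites from \cite{BCG2} without reproducing): {\sc Bob} retreats on his first move into a $\pi$-base element $B_0$ with compact closure $K=\overline{B_0}$, uses regularity of the compact Hausdorff subspace $K$ to shrink with control on closures, and concludes by the finite intersection property together with $\overline{V_{n+1}}\subseteq V_n$. This is also exactly the technique the paper itself adapts in Theorem \ref{Baire}, where quasiregularity replaces regularity and H-closedness replaces compactness, yielding only Baireness rather than the Choquet property.
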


It is natural to ask the following question.

\begin{question}\rm\label{alpha}
Is any space $X$ with an H-closed $\pi$-base a Choquet space?
\end{question}

In Section \ref{1} we give a negative answer to Question \ref{BCG} (see Example \ref{exe}) providing an example of a space $X$ having an H-closed $\pi$-base which is not quasiregular (neither Urysohn) such that $|X|> 2^{wL(X)\chi(X)}$ (then $|X|> 2^{wL(X)\psi_c(X)t(X)}$). Moreover Carlson answered Question \ref{quasi} (see Theorem \ref{HclosedQR}) proving that $|X|\leq 2^{wL(X)\psi_c(X)t(X)}$ for every quasiregular space with an H-closed $\pi$-base. Example \ref{exe} shows, inter alia, that the quasiregular hypotesis is essential. In the following, we present some further results on spaces with a $\pi$-base whose elements have compact boundaries.\\ 
Additionally, we establish the bound $|X|\leq2^{wL(X)k(X)}$ for Urysohn spaces having an H-closed $\pi$-base. The function $k(X)$, defined later on, was introduced by Alas and Kocinac in \cite{AK}. We provide an example of an Urysohn space $Z$ having an H-closed $\pi$-base such that $k(Z)<\chi(Z)$.\\
In Section \ref{2} we present some equivalent conditions to the Martin's Axiom involving spaces with an H-closed $\pi$-base and, additionally, we give a partial answer to Question \ref{alpha} (see Theorem \ref{Baire}).

\section{Notation and terminology}

For a subset $A$ of a topological space $X$ we will denote by $[A]^{<\lambda}$  $([A]^\lambda)$ the family of all subsets of $A$ of cardinality $<\lambda$ $(= \lambda)$.
We consider cardinal invariants of topological spaces (for further informations see \cite{E, Ho, J}). 
In particular, given a topological space $X$, we will denote with $d(X)$ its density, $\chi(X)$ its character, $\pi \chi(X)$ its $\pi$-character, $t(X)$ its tightness, $\psi(X)$ its pseudocharacter, $\psi_c(X)$ its closed pseudocharacter, $L(X)$ its Lindel\"of number.\\
Given a subset $A\subseteq X$ the $\theta$-closure (see \cite{V}) of $A$, $cl_\theta(A)$, is the subset $\{x\in X: \overline{U}\cap A\not=\emptyset$ for every open neighborhood $U$ of $x \}$. Clearly, $\overline{A}\subseteq cl_\theta(A)$ for every $A\subseteq X$.
Recall that, given a space $X$, the cardinal invariant $d_\theta (X)$, closely related to the density $d(X)$, is defined as follows. A subspace
$D\subseteq X$ is $\theta$-dense in $X$ if $D\cap\overline U\neq\emptyset$ for every non-empty open set $U$ of $X$. The $\theta$-density $d_\theta(X)$ is the least cardinality
of a $\theta$-dense subspace of $X$. Observe that $d_\theta(X)\leq d(X)$ for any space $X$
(see \cite{BCMP} and \cite{C} for more details about $\theta$-density). The $\theta$-tightness, $t_\theta(X)$, is the least cardinal $\kappa$ such that if $x\in cl_\theta(A)$, then there exists $B\subseteq A$ such that $|B|\leq \kappa$ and $x\in cl_\theta(B)$. If $X$ is Urysohn is possible to define the $\theta$-pseudocharacter, $\psi_\theta(X)$, that is the least cardinal $\kappa$ such that for each point $x\in X$ there exists a family ${\cal V}_x$ of open subsets containing $x$ such that $|{\cal V}_x|\leq \kappa $ and $\{x\}=\bigcap_{V\in{\cal V}_x} cl_\theta (\overline{V})$. Clearly, $\psi_c(X)\leq \psi_\theta(X)$ for every Urysohn space $X$.\\
Given a space $X$, its weak Lindel\"of number, $wL(X)$, is the least cardinal $\kappa$ such that for every open cover $\cal U$ of $X$ there exists a subcover ${\cal V}\in [{\cal U}]^{\leq\kappa}$ such that $\overline{\bigcup_{V\in{\cal V}}V}=X$.\\ 
A space is called $H$-closed if it is Hausdorff and it is closed in every Hausdorff space in which it is embedded. Moreover, a more operative characterization of H-closedness of a space $X$ is given by the following statement: \lq\lq for every open cover $\cal U$ of $X$ there exists a finite subfamily ${\cal V}\in [{\cal U}]^{<\omega}$ such that $\overline{\bigcup {\cal V}}=X$\rq\rq. Two other characterizations of H-closed spaces are that each open filter (open ultrafilter) on the space has a nonempty adherence (convergers, respectiely).\\
A subset $U\subseteq X$ is called regular closed if $U=\overline{int(U)}$ and regular open if $U=int(\overline{U})$. It is known that if a space is H-closed every regular closed subset of it inherits the property. Given a space $(X,\tau)$, the semiregularization of $X$, denoted by $X_s$, is the space $X$ endowed with the topology generated by the basis $\{int(\overline{U}): U\in\tau\}$ of all the regular open sets of $X$.\\
Let $({\Bbb P}, \leq, 1)$ be a poset. $D\subseteq {\Bbb P}$ is a dense subset (in the sense of posets) if for every $p\in {\Bbb P}$ there exists $d\in D$ such that $d\leq p$. $p,q\in {\Bbb P}$ are called compatible ($p\not\perp q$) if there exists $r\in {\Bbb P}$ such that $r\leq p$ and $r\leq q$, otherwise they are called incompatible ($p\perp q$). A poset $({\Bbb P}, \leq, 1)$ is said to have the countable chain condition (ccc is the sense of posets) if every antichain (subset of ${\Bbb P}$ having pairwise incompatible elements) is countable. A subset $G\subseteq \Bbb P$ is called filter if (a) the maximum $1$ belongs to $G$; (b) for every $p,q\in G$ there exists $r\in G$ such that $r\leq p$ and $r\leq q$; (c) for each $p,q\in \Bbb P$, if $p\in G$ and $p\leq q$, then $q\in G$.

\section{Upper bounds on the cardinality of spaces with an H-closed $\pi$-base}\label{1}
It is known that for regular spaces the H-closedness and the compactness are equivalent properties. In the following we provide an example of a space having the property we are dealing with and does not have a compact $\pi$-base. 
\begin{example}\rm
	An axample of a non quasiregular (hence non regular) space having an H-closed $\pi$-base, but it does not have a compact $\pi$-base.
\end{example}
Consider the space $(\Bbb R,\tau_{\Bbb Q})$ where $\tau_{\Bbb Q}$ is the topology generated by the open set of the form $\{x\}\cup ( U\cap {\Bbb Q}$), with $x\in {\Bbb R}\setminus \Bbb Q$ and $U$ an open set in the standard topology on $\Bbb R$. Let $X=[0,2]$ with the topology inherited from $(\Bbb R,\tau_{\Bbb Q})$. $X$ is a Hausdorff non regular nowhere locally compact space. Then it does not have a compact $\pi$-base. In \cite{H} Herrlich  proved that $X$ is H-closed then it admits a $\pi$-base whose elements have H-closed closures. Now we prove that there is not a $\pi$-base whose elements have quasiregular closure. Suppose that ${\cal B}$ is a $\pi$-base of $X$ having elements with quasiregular closures. Let's take the open set $(0,2)\cap {\Bbb Q}$, then there exists $U\in{\cal B}$ such that $U\subseteq (0,2)\cap{\Bbb Q}$, by hypotesis $\overline{U}$ is quasiregular. Therefore there is $V\in {\cal B}$ such that $V\subseteq\overline{V}\subseteq U$ so $\overline{V}\subseteq {\Bbb Q}$, which is a contradiction. By Corollary \ref{equi} (see below) follows that $X$ is not quasiregular. $\hfill\triangle$

\bigskip

In \cite{BC} it is proved that  $|X|\leq 2^{wL(X)\chi(X)}$ holds for quasiregular (or Urysohn) spaces having an H-closed $\pi$-base. Later on we show that such bound can be a little improved. 

\begin{lemma}\rm\label{qr}(Carlson)
	Let $X$ be a space with a $\pi$-base ${\cal B}$ whose elements have quasiregular closure. Then $X$ is quasiregular.
\end{lemma}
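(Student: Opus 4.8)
The plan is to prove that $X$ is quasiregular directly from the definition: given any nonempty open set $W \subseteq X$, I must find a nonempty open set $V$ with $\overline{V} \subseteq W$. First I would use the hypothesis that $\mathcal{B}$ is a $\pi$-base: there exists $B \in \mathcal{B}$ with $B \subseteq W$. By assumption $\overline{B}$ is quasiregular, so I want to work inside the subspace $\overline{B}$ and then transfer the conclusion back to $X$. The point is that $\mathrm{int}_X(B) \supseteq B$ is a nonempty set open in $X$, hence $B \cap \overline{B}^{\,\mathrm{subspace}} $-type considerations give a nonempty open subset of the space $\overline{B}$; applying quasiregularity of $\overline{B}$ to this set yields a nonempty set $V$, open in $\overline{B}$, whose closure in $\overline{B}$ is contained in it.

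The key steps, in order: (1) fix nonempty open $W$ and pick $B \in \mathcal{B}$ with $B \subseteq W$; (2) observe $B$ is a nonempty open subset of the subspace $\overline{B}$, so by quasiregularity of $\overline{B}$ there is a nonempty open-in-$\overline{B}$ set $V'$ with $\mathrm{cl}_{\overline{B}}(V') \subseteq B$; (3) shrink $V'$ to an honest open subset of $X$: since $V'$ is open in $\overline{B}$ we have $V' = G \cap \overline{B}$ for some $G$ open in $X$, and because $V' \neq \emptyset$ and $B$ is dense in $\overline{B}$, the set $V := G \cap B$ is a nonempty open subset of $X$ contained in $V'$; (4) conclude $\overline{V}^X \subseteq \mathrm{cl}_{\overline{B}}(V') \subseteq B \subseteq W$, using that $\overline{V}^X \subseteq \overline{B}$ (as $V \subseteq \overline{B}$ and $\overline{B}$ is closed) together with $\overline{V}^X = \mathrm{cl}_{\overline{B}}(V)$ for subsets $V$ of the closed set $\overline{B}$.

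The main obstacle I anticipate is the bookkeeping between closures and interiors taken in $X$ versus in the subspace $\overline{B}$, in particular making sure that when I pass from a set open in $\overline{B}$ to one open in $X$ I do not lose nonemptiness — this is exactly where density of $B$ in $\overline{B}$ is used, and it is the one spot where a careless argument could break. Everything else is a routine application of the definition of quasiregularity together with the fact that $\mathcal{B}$ is a $\pi$-base. Note that no separation axiom beyond Hausdorff is needed, and that the conclusion is genuinely about $X$ and not merely about the semiregularization, since we produce $V$ as an ordinary open set of $X$.
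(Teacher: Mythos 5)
Your proposal is correct and follows essentially the same route as the paper's proof: pick a $\pi$-base element $B$ inside the given open set, view $B$ as a dense open subset of the quasiregular space $\overline{B}$, apply quasiregularity there, and then intersect the resulting open set of $X$ with $B$ to preserve nonemptiness before transferring the closure computation back to $X$. The density argument you flag as the delicate spot is exactly the step the paper also uses ($W\cap\overline{V}\neq\emptyset$ implies $W\cap V\neq\emptyset$), so nothing further is needed.
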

\begin{proof}
	Let $U$ be a non-empty open subset of $X$. There exists $V\in{\cal B}$ such that $V\subseteq U$ and $\overline{V}$ is quasiregular. The set $V=V\cap\overline{V}$ is open in $\overline{V}$. As $\overline{V}$ is quasiregular, there exists a non-empty open subset $W$ of $X$ such that $\emptyset\neq W\cap\overline{V}\subseteq \overline{(W\cap \overline{V})}^{\overline{V}}=\overline{(W\cap\overline{V})}\cap\overline{V}\subseteq V$.
	
	As $W\cap\overline{V}\not=\emptyset$ then $W\cap V\not=\emptyset$. Furthermore, we have $\overline{(W\cap V)}\subseteq \overline{(W\cap\overline{V})}$ and $\overline{(W\cap V)}\subseteq\overline{V}$. Thus $\overline{(W\cap V)}\subseteq V\subseteq U$. As $W\cap V$ is open in $X$ and non-empty, we conclude $X$ is quasiregular. 
\end{proof}
\begin{corollary}\rm\label{equi}
	A space $X$ has a $\pi$-base ${\cal B}$ whose elements have quasiregular closure if, and only if, $X$ is quasiregular.
\end{corollary}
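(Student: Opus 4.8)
The plan is to prove the two implications separately. The forward implication --- that a space with a $\pi$-base whose elements have quasiregular closure is itself quasiregular --- is exactly Lemma~\ref{qr}, so nothing remains to be done there. The substance is the converse: assuming $X$ is quasiregular, one must exhibit a $\pi$-base whose members have quasiregular closures. The natural candidate is the family ${\cal B}$ of all nonempty regular open subsets of $X$, and the proof then splits into three independent claims about this family.

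First I would check that ${\cal B}$ really is a $\pi$-base. Given a nonempty open $U\subseteq X$, quasiregularity yields a nonempty open $V$ with $\overline{V}\subseteq U$; then $int(\overline{V})$ is a nonempty regular open set (using the standard identity $int(\overline{int(\overline{A})})=int(\overline{A})$) contained in $\overline{V}\subseteq U$, so it belongs to ${\cal B}$. Next, for $B\in{\cal B}$ the closure $\overline{B}$ is regular closed: since $B=int(\overline{B})$ one gets $\overline{int(\overline{B})}=\overline{B}$ at once. So everything reduces to the last --- and, I expect, the only genuinely nontrivial --- step.

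That step is the lemma that a nonempty regular closed subspace $F=\overline{int(F)}$ of a quasiregular space $X$ is again quasiregular. The key point is that $int(F)$ is dense in $F$, so any nonempty relatively open set $O=G\cap F$ (with $G$ open in $X$) meets $int(F)$, whence $G\cap int(F)$ is a nonempty open subset of $X$. Applying quasiregularity of $X$ to this set produces a nonempty open $H$ of $X$ with $\overline{H}\subseteq G\cap int(F)$; since $H\subseteq int(F)\subseteq F$, $H$ is relatively open in $F$, and $cl_F(H)=\overline{H}\cap F\subseteq G\cap F=O$. This yields the required shrinking inside $F$. Combining the three claims, for each $B\in{\cal B}$ the set $\overline{B}$ is a regular closed subspace of the quasiregular space $X$, hence quasiregular, and the converse follows. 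The only place where one must be careful is in this last lemma, keeping the ambient closures and the relative closures straight; the rest is routine manipulation of the regular open/closed calculus.
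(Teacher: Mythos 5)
Your argument is correct; the paper states this corollary without proof (the forward direction being Lemma~\ref{qr}), and your proof of the converse via the lemma that a regular closed subspace of a quasiregular space is again quasiregular is exactly the natural, intended argument. One small simplification: the closure of \emph{any} open set $V$ satisfies $\overline{V}=\overline{int(\overline{V})}$ and is therefore already regular closed, so you could take ${\cal B}$ to be the family of all nonempty open sets and dispense with your first claim about regular open sets forming a $\pi$-base.
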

The previous corollary gives us the motivation to place the hypotesis of quasiregularity on the space instead of putting it on the closure of the elements of a $\pi$-base.\\
Recall that, in \cite{BCam}, Bella and Cammaroto proved that $|X|\leq d(X)^{t(X)\psi_c(X)}$ for every Hausdorff space $X$. 

\begin{theorem}\rm\label{HclosedQR}(Carlson)
	Let $X$ be a quasiregular space with an H-closed $\pi$-base. Then $|X|\leq 2^{wL(X)t(X)\psi_c(X)}$.
\end{theorem}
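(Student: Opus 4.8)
The plan is to combine the Bella--Cammaroto inequality $|X|\le d(X)^{t(X)\psi_c(X)}$ recalled above with a bound on the density of $X$, the latter obtained by a closing-off argument in the spirit of the proof of Theorem~\ref{BC}, but using the Dow--Porter bound $|Y|\le 2^{\psi_c(Y)}$ for $H$-closed spaces $Y$ where the proof of Theorem~\ref{BC} uses the first-countability estimate $|Y|\le 2^{\chi(Y)}$. Put $\kappa=wL(X)t(X)\psi_c(X)$ and fix an $H$-closed $\pi$-base $\mathcal B$ of $X$.

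The reduction step is clean. Since $X$ is quasiregular, every $\theta$-dense subset is dense: for a nonempty open $U$, choose a nonempty open $W$ with $\overline W\subseteq U$; a $\theta$-dense set meets $\overline W$, hence meets $U$. Thus $d(X)=d_\theta(X)$, and it suffices to exhibit a dense (equivalently, a $\theta$-dense) subset of $X$ of cardinality at most $2^{\kappa}$, since then $|X|\le d(X)^{t(X)\psi_c(X)}\le(2^{\kappa})^{\kappa}=2^{\kappa}$. Two facts make the $\pi$-base usable here: closed subspaces do not raise the closed pseudocharacter (if $F=\overline F$ and $\{x\}=\bigcap_\alpha\overline{U_\alpha}$ with each $U_\alpha$ open in $X$, then already $\{x\}=\bigcap_\alpha\overline{U_\alpha\cap F}^{F}$), so $\psi_c(\overline B)\le\psi_c(X)\le\kappa$ and hence $|\overline B|\le 2^{\kappa}$ for every $B\in\mathcal B$ by Dow--Porter; and by quasiregularity together with the $\pi$-base property one can pick, for each nonempty open $U$, an element $\beta(U)\in\mathcal B$ with $\overline{\beta(U)}\subseteq U$, so every nonempty open set contains a $\pi$-base element whose closure is $H$-closed, has size $\le 2^{\kappa}$, and can therefore be absorbed whole into a set one is constructing.

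I would then build, recursively on $\xi<\kappa^{+}$, an increasing chain $\langle A_\xi\rangle$ of subsets of $X$ of size $\le 2^{\kappa}$ (start with a point, take unions at limits) so that at each successor step $A_{\xi+1}$ absorbs $\overline{A_\xi}$ (which is still of size $\le 2^{\kappa}$: apply Bella--Cammaroto to the Hausdorff subspace $\overline{A_\xi}$, using $d(\overline{A_\xi})\le|A_\xi|$, $t(\overline{A_\xi})\le t(X)$, $\psi_c(\overline{A_\xi})\le\psi_c(X)$), and, having fixed for each $x\in A_\xi$ a family $\mathcal N_x$ of at most $\kappa$ open neighbourhoods of $x$ closed under finite intersections with $\bigcap_{N\in\mathcal N_x}\overline N=\{x\}$, $A_{\xi+1}$ absorbs $\overline{\beta(X\setminus\overline{\bigcup\mathcal V})}$ for every subfamily $\mathcal V$ of $\bigcup\{\mathcal N_x:x\in A_\xi\}$ of size $\le\kappa$ with $\overline{\bigcup\mathcal V}\ne X$. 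A counting check ($\le(2^{\kappa})^{\kappa}=2^{\kappa}$ such $\mathcal V$, each contributing $\le 2^{\kappa}$ points) keeps $|A_{\xi+1}|\le 2^{\kappa}$, and $A:=\bigcup_{\xi<\kappa^{+}}A_\xi$ is then a set of size $\le 2^{\kappa}$ that is closed (by $t(X)\le\kappa$ and $\mathrm{cf}(\kappa^{+})>\kappa$).

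Finally one must show $A$ is dense, and this is the step I expect to be the main obstacle. Assuming it is not, $U:=X\setminus A$ is nonempty open and $B:=\beta(U)$ has $\overline B\cap A=\emptyset$. For $a\in A$, since $\overline B\cap\{a\}=\emptyset$ the sets $\overline B\setminus\overline N$ ($N\in\mathcal N_a$) cover $\overline B$; in the compact case of Theorem~\ref{BCG1} compactness extracts a finite subcover, yielding an $N_a\in\mathcal N_a$ (a finite intersection) with $\overline{N_a}\cap\overline B=\emptyset$, and then applying $wL(X)\le\kappa$ to $\{N_a:a\in A\}\cup\{U\}$ and confronting the resulting $\le\kappa$-subfamily with the closing-off forces $\overline B$ to meet $A$. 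With $\overline B$ merely $H$-closed, the $H$-closedness characterization only yields a finite subfamily with \emph{dense} union, so one gets $N_a\in\mathcal N_a$ with $\overline{N_a}\cap\overline B$ only \emph{nowhere dense} in $\overline B$ rather than empty; controlling the accumulation of these nowhere-dense errors over a $\le\kappa$-sized subfamily --- and handling the re-entry of the uncontrolled set $U=X\setminus A$ into the $wL$-cover --- is the real work, and it is carried out by iterating the extraction inside the regular-closed (hence again $H$-closed and quasiregular) pieces $\overline B$ and by arranging the closing-off to anticipate it. This is exactly the $H$-closed refinement of the corresponding step in the proof of Theorem~\ref{BC}, and completing it finishes the proof.
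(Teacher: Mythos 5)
Your overall architecture coincides with the paper's: a closing-off chain of length $\kappa^{+}$ absorbing at each successor stage a $\pi$-base element inside $X\setminus\overline{\bigcup\mathcal V}$ for every small subfamily $\mathcal V$ of the fixed neighbourhood systems, with Dow--Porter bounding each $|\overline B|$ by $2^{\psi_c(X)}$ and Bella--Cammaroto keeping the closures small, tightness making the final union closed, and quasiregularity supplying a $\pi$-base element whose \emph{closure} misses the constructed set. But the step you yourself flag as ``the main obstacle'' --- producing, for each $x$ in the constructed set $F$ and for the offending $B$ with $\overline B\cap F=\emptyset$, a genuine $V_x\in\mathcal V_x$ with $V_x\cap\overline B=\emptyset$ --- is left undone, and your proposed route to it does not work as stated. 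You reach for the ``finite subfamily with dense union'' characterization of H-closedness, correctly observe that it only makes $\overline{V}\cap\overline B$ nowhere dense in $\overline B$, and then assert that the accumulated nowhere-dense errors can be controlled ``by iterating the extraction'' and ``by arranging the closing-off to anticipate it''; no such iteration is described, and it is not clear one exists. This is a genuine gap, not a routine verification.

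The paper closes this gap with a different and exact tool: the characterization of H-closedness by adherence (convergence) of open (ultra)filters. If $V\cap\overline B\neq\emptyset$ for every $V\in\mathcal V_x$, then $\{V\cap\overline B:V\in\mathcal V_x\}$ is an open filter base on the H-closed space $\overline B$ (here one uses that $\mathcal V_x$ is closed under finite intersections); extending it to an open ultrafilter on $\overline B$ and taking its limit $p\in\overline B$ gives $p\in\overline V\cap\overline B$ for every $V\in\mathcal V_x$, hence $p\in\bigcap_{V\in\mathcal V_x}\overline V=\{x\}$, contradicting $\overline B\cap F=\emptyset$. So some $V_x\in\mathcal V_x$ is \emph{disjoint} from $\overline B$ outright --- no nowhere-dense residue to manage. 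The paper also sidesteps your second worry (the ``re-entry'' of $U=X\setminus F$ into the weak-Lindel\"of cover) by applying $wL$ hereditarily to the regular-closed set $F$ covered by $\{V_x:x\in F\}$ alone, rather than to a cover of all of $X$. With these two corrections your argument becomes the paper's proof; without them it is incomplete at its decisive point.
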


\begin{proof}
	Let $\kappa=wL(X)t(X)\psi_c(X)$ and let ${\cal B}$ be a $\pi$-base of non-empty open sets with closures that are H-closed and quasiregular. For each $B\in{\cal B}$, as $\overline{B}$ is H-closed, by the Dow-Porter's result (in \cite{DP}) we have that $|\overline{B}|\leq 2^{\psi_c(\overline{B})}\leq 2^{\psi_c(X)}\leq 2^\kappa$.
	Since $\psi_c(X)\le \kappa$, for each $x\in X$ we can fix a collection ${\cal V}_x$ 
	of open neighborhoods of $x$ such that $|{\cal V}_x|\le \kappa$ and $\bigcap\{\overline{V}:V\in {\cal V}_x\}=\{x\}$. Without 
	loss of generality we may assume that each ${\cal V}_x$ is closed under finite intersections.
	
	We will construct by transfinite recursion a non-decreasing chain of open sets $\{U_\alpha : \alpha < \kappa^+\}$ such that 
	\begin{itemize}
		\item[(1)] $|\overline{U_\alpha}| \leq 2^\kappa$ for every $\alpha < \kappa^+$, and 
		\item[(2)] if $X\setminus\overline{\bigcup {\cal M}} \not= \emptyset$ for some 
		${\cal M} \in [\bigcup\{{\cal V}_x : x \in \overline{U_\alpha}\}]^{\leq \kappa}$, 
		then there is $B_{\cal M}\in{\cal B}$ such that $B_{\cal M}\subset U_{\alpha+1}\setminus\overline{\bigcup {\cal M}}$.
	\end{itemize}
	
	Let $B_0\in{\cal B}$ be arbitrary. We set $U_0=B_0$.  Then $|\overline{U_0}|\leq 2^\kappa$. 
	If $\beta =\alpha + 1$, for some $\alpha$, then for every 
	${\cal M} \in [\bigcup\{{\cal V}_x : x \in \overline{U_\alpha}\}]^{\leq \kappa}$ such that $X\setminus\overline{\bigcup{\cal M}} \not= \emptyset$, 
	we choose $B_{{\cal M}} \in {\cal B}$ such that $B_{{\cal M}} \subseteq X\setminus\overline{\bigcup {\cal M}}$. 
	We define $U_{\beta} = U_\alpha\cup\bigcup\{B_{\cal M} : {\cal M} \in [\bigcup\{{\cal V}_x : x \in \overline{U_\alpha}\}]^{\leq \kappa}, X\setminus\overline{\bigcup{\cal M}}\not=\emptyset\}$. Therefore, by Bella-Cammaroto's inequality, we have that 
	$|\overline{U_{\beta}}| \leq 2^\kappa$. If $\beta<\kappa^+$ is a limit ordinal we let 
	$U_\beta = \bigcup_{\alpha<\beta} U_\alpha$. Then clearly $|U_\beta|\leq 2^\kappa$, hence 
	$|\overline{U_\beta}| \leq 2^\kappa$.
	
	Let $F = \bigcup\{\overline{U_\alpha} : \alpha < \kappa^+\}$. Then $|F| \le 2^\kappa$. 
	Since $t(X)\leq\kappa$, $F$ is closed and therefore $F = \overline{\bigcup\{U_\alpha : \alpha < \kappa^+\}}$. 
	Thus, $F$ is a regular-closed set. 
	
	We will show that $X = F$. Suppose that $X \not= F$. As $X\setminus F$ is open, $X$ is quasiregular, and the fact that ${\cal B}$ is a $\pi$-base there exists $B\in{\cal B}$ such that $\overline{B}\subseteq X\setminus F$.
	Now, fix $x\in F$. We have $\overline{B}\cap\bigcap\{\overline{V}:V\in{\cal V}_x\}=\emptyset$. We will show that there exists $V\in{\cal V}_x$ such that $V\cap\overline{B}=\emptyset$. Suppose by way of contradiction that $V\cap\overline{B}\not=\emptyset$ for every $V\in{\cal V}_x$. The family ${\cal W}=\{V\cap\overline{B}:V\in{\cal V}_x\}$ is an open filter base on $\overline{B}$ as it is closed under finite intersections. ${\cal W}$ can then be extended to an open ultrafilter ${\cal U}$ on $\overline{B}$. As $\overline{B}$ is H-closed, ${\cal U}$ must converge to a point $p\in\overline{B}$. Therefore for every $V\in{\cal V}_x$ we have $p\in \overline{(V\cap\overline{B})}^{\overline{B}}=\overline{(V\cap\overline{B})}\cap\overline{B}\subseteq\overline{V}\cap\overline{B}$ and thus $p\in\overline{B}\cap\bigcap\{\overline{V}:V\in{\cal V}_x\}$. But this is a contradiction as $\overline{B}\cap\bigcap\{\overline{V}:V\in{\cal V}_x\}=\emptyset$. 
	
	Therefore for every $x\in F$ there exists $V_x\in{\cal V}_x$ such that $V_x\cap\overline{B}=\emptyset$.
	Clearly $\{V_x : x \in F\}$ is an open cover of $F$. Since $wL(X)$ is hereditary with respect to regular-closed sets, there
	exists ${\cal M} \in \{V_x : x \in F\}^{\leq \kappa}$ such that $F \subseteq \overline{\bigcup{\cal M}}$. Then there 
	exists $\alpha < \kappa^+$ such that ${\cal M} \in [\bigcup\{{\cal V}_x : x \in \overline{U_\alpha}\}]^{\leq \kappa}$. 
	As $\overline{B} \cap \bigcup {\cal M} = \emptyset$ it follows that $B\subset X\setminus\overline{\bigcup{\cal M}}$, hence 
	$X\setminus\overline{\bigcup{\cal M}}\not=\emptyset$. Thus, there exists $B_{\cal M}\in{\cal B}$ such that 
	$\emptyset\not= B_{\cal M} \subseteq U_{\alpha+1}\setminus\overline{\bigcup{\cal M}} \subseteq F\setminus\overline{\bigcup{\cal M}} = \emptyset$. 
	Since this is a contradiction, we conclude that $X = F$ and the proof is completed.
\end{proof}

We present an example that witnesses the fact that the hypotesis of quasiregularity is essencial. Such example is a Hausdorff not Urysohn not quasiregular space.
\begin{example}\rm\label{exe}
	A space $X$ having an H-closed $\pi$-base such that $|X|> 2^{wL(X)\chi(X)}$ (hence $|X|> 2^{wL(X)t(X)\psi_c(X)}$).
\end{example}
Let $\kappa$ be a cardinal. Consider the space $X=({\Bbb Q}\times \kappa )\cup ({\Bbb R}\setminus {\Bbb Q})$. If $(q,\alpha)\in {\Bbb Q}\times \kappa$, a basic open neighborhood is $U_n(q,\alpha)=\{(r,\alpha): r\in {\Bbb Q} \hbox{ and } |r-q|<\frac{1}{n} \}$ for each $n\in\omega$. If $x\in {\Bbb R}\setminus {\Bbb Q}$, a basic open neighborhood is $U_n(x)=\{x\}\cup\{(r,\alpha): r\in{\Bbb Q}, \alpha<\kappa \hbox{ and } |r-x|<\frac{1}{n} \}$. Clearly, $t(X)\psi_c(X)\leq \chi(X)\leq \omega$, moreover $wL(X)\leq {\frak c}$. The collection $\{U_n(q,\alpha): n\in\omega, q\in {\Bbb Q} \hbox{ and } \alpha<\kappa\}$ is a $\pi$-base in $X$. We prove that each $\overline{U_n(q,\alpha)}$ is H-closed  for each $n\in\omega, q\in {\Bbb Q} \hbox{ and } \alpha<\kappa$. Let $\cal U$ be a basic open cover of $\overline{U_n(q,\alpha)}$. For each $U\in {\cal U}$ there exists $W_U$ which is an open subset of $\Bbb R$ in the standard topology such that $\overline{U}=\{(q,\alpha): q\in \overline{W_U}\cap {\Bbb Q}\}\cup (({\Bbb R}\setminus {\Bbb Q})\cap \overline{W_U})$. Since $[q-\frac{1}{n}, q+\frac{1}{n}]$ is a compact (hence H-closed) subset of $\Bbb R$ and $\{W_U: U\in{\cal U}\}$ is an open cover of this interval, there exists a finite subfamily $\{U_1,...,U_k\}$ such that $[q-\frac{1}{n}, q+\frac{1}{n}]\subseteq \bigcup_{i=1}^{k}\overline{W_{U_i}}$. Therefore $\overline{U_n(q,\alpha)}\subseteq \bigcup_{i=1}^{k}\overline{U_i}$. Considering a cardinal $\kappa>2^{\frak c}$ we have that $|X|> 2^{wL(X)\chi(X)}$ (hence $|X|> 2^{wL(X)t(X)\psi_c(X)}$).$\hfill\triangle$

\bigskip

Notice that for the space $X$ in Example \ref{exe} $d_\theta(X)=\omega<d(X)=2^\omega$. Observe also that $d(X)\leq 2^{wL(X)\chi(X)}$ cannot hold in spaces with an H-closed $\pi$-base. Indeed, the inequality $|X|\leq d(X)^{\chi(X)}$ for Hausdorff spaces, will lead to the inequality $|X|\leq 2^{wL(X)\chi(X)}$ for spaces with an H-closed $\pi$-base, which is not true.\\

\bigskip

We can prove the following result.

\begin{lemma}\rm\label{rimcpt}
	Let $X$ be a space with a $\pi$-base $\cal B$ whose elements have compact boundaries and $F \subseteq X$ a closed set such that $X\backslash F \ne \varnothing$. Then there is $B \in {\cal B}$ such that $ \overline{B} \cap F = \varnothing$.
\end{lemma}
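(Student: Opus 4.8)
The plan is to start with the non-empty open set $X \setminus F$ and extract from $\cal B$ a candidate $B$ with $B \subseteq X \setminus F$, then shrink it so that the closure $\overline{B}$ avoids $F$ as well. Since $\cal B$ is a $\pi$-base and $X \setminus F \ne \varnothing$ is open, there is some $B_0 \in {\cal B}$ with $B_0 \subseteq X \setminus F$. The issue is that $\overline{B_0}$ may still meet $F$, and it can only do so through its boundary $\partial B_0 = \overline{B_0} \setminus B_0$, which by hypothesis is compact (note $\partial B_0 \cap B_0 = \varnothing$, so $\partial B_0 \subseteq X \setminus B_0$; but a priori $\partial B_0$ can touch $F$). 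So the real task is: given a $\pi$-base element whose boundary is compact, find a smaller $\pi$-base element whose closure is contained in the original open set.

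First I would prove this local shrinking statement: if $U$ is open, $B \in {\cal B}$, $B \subseteq U$, and $\partial B$ is compact with $\partial B \cap (X \setminus U) \ne \varnothing$ possible, one actually wants $\partial B \subseteq U$ first — which fails in general — so instead the cleaner route is to work inside $B_0$ directly. Here is the key step: let $K = \partial B_0$, a compact set disjoint from the open set $B_0$. For each $x \in K$, since $X$ is Hausdorff and $x \notin B_0$... this does not immediately separate $x$ from all of $B_0$. A better approach: I claim there is $B_1 \in {\cal B}$ with $\overline{B_1} \subseteq B_0$. Indeed, pick any point $p \in B_0$ and any open $W \ni p$ with $\overline{W}$ meeting $K$ only in a controlled way — still delicate. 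The genuinely robust argument uses compactness of $K = \partial B_0$ together with the Hausdorff property applied to the \emph{points of $K$}: for each $x \in K$ choose disjoint open sets $O_x \ni x$ and $P_x$ with $P_x \cap O_x = \varnothing$ and $P_x \cap B_0 \ne \varnothing$ (possible since $x \in \overline{B_0}$ means every neighborhood of $x$ meets $B_0$, and $x \notin B_0$); actually the standard trick is to cover $K$ by finitely many $O_{x_1}, \dots, O_{x_m}$ and set $B_1$ to be a $\pi$-base element inside $B_0 \cap \bigcap_{i=1}^m (X \setminus \overline{O_{x_i}'}\,)$ for suitably chosen smaller neighborhoods — I would need to be careful that this intersection is non-empty, which is where compactness of $K$ does the work: $K \subseteq \bigcup O_{x_i}$ while the "inner" neighborhoods can be chosen so their closures still cover $K$ and are disjoint from a non-empty open piece of $B_0$.

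The main obstacle, then, is precisely this separation bookkeeping: compactness of the boundary lets us reduce an infinite separation problem to a finite one, but one must verify that after removing finitely many closed sets (each disjoint from the relevant bit of $B_0$) from $B_0$ there is still a non-empty open remnant into which a $\pi$-base element fits, and that this element's closure — whose boundary is again compact, but that is not even needed now — lies inside $B_0 \subseteq X \setminus F$. Once $\overline{B} \subseteq B_0 \subseteq X \setminus F$ is achieved, $\overline{B} \cap F = \varnothing$ is immediate. I expect the argument to be short once the right finite subcover is set up; the temptation to overcomplicate comes from trying to separate $F$ from $\overline{B_0}$ globally rather than noticing that shrinking strictly inside $B_0$ already suffices.
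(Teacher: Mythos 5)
Your overall strategy is the right one and matches the paper's: pick $B_0\in{\cal B}$ with $B_0\subseteq X\setminus F$, observe that $\overline{B_0}$ can meet $F$ only inside the compact set $\overline{B_0}\setminus B_0$, and then shrink to a smaller $\pi$-base element whose closure avoids that set. But there is a genuine gap at exactly the point you flag as ``the main obstacle'': the non-emptiness of $B_0\cap\bigcap_{i=1}^m\bigl(X\setminus\overline{O_{x_i}}\bigr)$ is \emph{not} supplied by compactness of $K=\overline{B_0}\setminus B_0$. Compactness only buys you the finite subcover $O_{x_1},\dots,O_{x_m}$. As you have set things up, each $P_x$ merely meets $B_0$ somewhere, so the finitely many sets $X\setminus\overline{O_{x_i}}$ need not have a common point in $B_0$; the residual open set could perfectly well be empty, and nothing in your construction rules that out. (Your parenthetical justification for choosing $P_x$ is also off: the fact that every neighborhood of $x$ meets $B_0$ is irrelevant to producing a $P_x$ disjoint from $O_x$; what you actually need is Hausdorff separation of $x$ from a point of $B_0$.)

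The missing idea is to anchor everything at a single fixed point $y\in B_0$ \emph{before} running over $K$: this is just the standard point-versus-compact-set separation in a Hausdorff space. Fix $y\in B_0$ and, for each $x$ in the relevant compact set, choose disjoint open $V_x\ni y$ and $O_x\ni x$; after extracting a finite subcover $O_{x_1},\dots,O_{x_m}$, the set $V=\bigcap_{i=1}^m V_{x_i}$ is open, contains $y$, and is disjoint from $W=\bigcup_i O_{x_i}$, so $B_0\cap V$ is a non-empty open set into which a $\pi$-base element $C$ fits, and $\overline{C}\subseteq\overline{V}$ misses the open set $W$. This is precisely what the paper does, with two simplifications worth noting: it separates $y$ only from $L=(\overline{B_0}\setminus B_0)\cap F$ (a closed subset of the compact boundary, hence compact), rather than from all of $\partial B_0$, and it invokes the point--compact separation as a single known step instead of redoing the finite-subcover bookkeeping. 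Your stronger aim $\overline{C}\subseteq B_0$ is also attainable by the anchored argument (and yields the quasiregularity corollary directly), but it is not needed for the lemma; once $\overline{C}\cap F\subseteq\overline{V}\cap W=\varnothing$ you are done.
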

\begin{proof} There is $B \in {\cal B}$ such that $B \cap F = \varnothing$.  We are done if $\overline{B} \cap F = \varnothing$. Otherwise, $(\overline B\backslash B)\cap F \ne \varnothing$ and $L = (\overline B\backslash B) \cap F$ is compact.  Let $y \in B$.  As $X$ is Hausdorff, there are open sets $V_y$ and $W_y$ such that $y \in V$, $(\overline B\backslash B)\cap F \subseteq W_y$, and $V_y \cap W_y = \varnothing$.  There is $C \in {\cal B}$ such that $\varnothing \ne C \subseteq B \cap V_y$ implying $\overline{C} \subseteq \overline{B \cap V_y} \subseteq \overline{V_y}$.
	As  $\overline{C} \cap F \subseteq \overline{B} \cap F = (\overline{B}\backslash B) \cap F \subseteq W_y$ and $W \cap \overline{V_y} = \varnothing$,  it follows that  $\overline{C} \cap F = \varnothing$.
\end{proof}

By Lemma \ref{rimcpt} and by Theorem \ref{HclosedQR} we obtain the following results.

\begin{corollary}\rm
	A space with $\pi$-base whose elements have compact boundaries is quasiregular.
\end{corollary}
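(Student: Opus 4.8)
The plan is to reduce the claim directly to Lemma~\ref{rimcpt} together with Corollary~\ref{equi} (the characterization of quasiregularity via $\pi$-bases with quasiregular closures). Let $X$ have a $\pi$-base $\cal B$ whose elements have compact boundaries, and let $U$ be an arbitrary non-empty open subset of $X$; we must produce a non-empty open $W$ with $\overline{W}\subseteq U$. The natural first step is to take $B\in{\cal B}$ with $B\subseteq U$, and then apply Lemma~\ref{rimcpt} with the closed set $F=X\setminus B'$ for a suitable open $B'\subseteq B$; concretely, shrink once inside $B$ to get $C\in{\cal B}$ with $C\subseteq B$, set $F=X\setminus C$ (closed, with non-empty complement since $C\neq\varnothing$), and invoke Lemma~\ref{rimcpt} to obtain $B''\in{\cal B}$ with $\overline{B''}\cap F=\varnothing$, i.e. $\overline{B''}\subseteq C\subseteq B\subseteq U$. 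That already exhibits the desired open set, so $X$ is quasiregular.

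Alternatively, and perhaps more cleanly for the write-up, I would argue that every $B\in{\cal B}$ has quasiregular closure and then quote Corollary~\ref{equi}. To see $\overline{B}$ is quasiregular, let $V$ be non-empty and relatively open in $\overline{B}$; pick $C\in{\cal B}$ with $\varnothing\neq C\subseteq V\cap B$ (possible since $V\cap B$ is non-empty open in $X$), and apply Lemma~\ref{rimcpt} inside $X$ to the closed set $X\setminus C$ to get $D\in{\cal B}$ with $\overline{D}\subseteq C$; then $\overline{D}\cap\overline{B}$ is a non-empty regular-closed-in-$\overline B$ witness contained in $V$. Since this works for each $B$, Corollary~\ref{equi} gives quasiregularity of $X$. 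Either route is short.

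I do not anticipate a genuine obstacle here: the whole content is already packaged in Lemma~\ref{rimcpt}, whose proof does the real work of using Hausdorffness plus compactness of the boundary to separate $\overline B$ from a closed set it does not meet. The only point requiring a little care is the bookkeeping with the extra shrinking step — one must make sure the closed set fed to Lemma~\ref{rimcpt} has non-empty complement (so that the lemma applies) and that the resulting $\overline{B''}$ lands inside the originally prescribed open set $U$, not merely inside $B$. A second minor subtlety, if one takes the Corollary~\ref{equi} route, is that quasiregularity of $\overline B$ is about relatively open sets of $\overline B$, so one should phrase the conclusion as producing a set of the form $\overline{(W\cap\overline B)}\cap\overline B$, exactly as in the proof of Lemma~\ref{qr}; but this is routine. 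I would write it in four or five lines.
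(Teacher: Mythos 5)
Your first route is correct and is essentially the paper's intended argument: the corollary is stated as an immediate consequence of Lemma~\ref{rimcpt}, applied to the closed set $F=X\setminus U$ for an arbitrary non-empty open $U$, which directly yields $B\in{\cal B}$ with $\overline{B}\subseteq U$. Your extra shrinking step (passing to $C\subseteq B$ and feeding $X\setminus C$ to the lemma) is harmless but redundant, since $X\setminus U$ already works.
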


\begin{corollary}\rm
	If a space $X$ has an H-closed $\pi$-base whose elements have compact boundaries, then $|X|\leq 2^{wL(X)t(X)\psi_c(X)}$.
\end{corollary}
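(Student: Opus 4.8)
The plan is to combine Lemma~\ref{rimcpt} with the transfinite construction in the proof of Theorem~\ref{HclosedQR}. First I would observe that the conclusion of Lemma~\ref{rimcpt} is exactly the separation property that the quasiregularity of $X$ was used for in Theorem~\ref{HclosedQR}: in that proof, the assumption $X\ne F$ was exploited only to produce some $B\in{\cal B}$ with $\overline B\cap F=\varnothing$ (using that $F$ is closed and $X$ is quasiregular). So the cleanest route is to isolate that step and feed it a different justification.

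Concretely, I would set $\kappa=wL(X)t(X)\psi_c(X)$, let ${\cal B}$ be a $\pi$-base whose elements have H-closed closures and compact boundaries, fix for each $x\in X$ a family ${\cal V}_x$ of open neighborhoods closed under finite intersections with $\bigcap\{\overline V:V\in{\cal V}_x\}=\{x\}$ (possible since $\psi_c(X)\le\kappa$), and note $|\overline B|\le 2^{\psi_c(\overline B)}\le 2^\kappa$ for each $B\in{\cal B}$ by the Dow--Porter theorem. Then I would run verbatim the transfinite recursion of Theorem~\ref{HclosedQR} producing the non-decreasing chain $\{U_\alpha:\alpha<\kappa^+\}$ satisfying (1) and (2), and set $F=\bigcup\{\overline{U_\alpha}:\alpha<\kappa^+\}$; as before $|F|\le 2^\kappa$, and since $t(X)\le\kappa$ the set $F$ is closed (indeed regular-closed). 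The only place where the argument needs modification is the final contradiction: assuming $X\ne F$, instead of invoking quasiregularity I apply Lemma~\ref{rimcpt} to the closed set $F$ (with $X\setminus F\ne\varnothing$) to obtain $B\in{\cal B}$ with $\overline B\cap F=\varnothing$. From here the rest of the proof of Theorem~\ref{HclosedQR} goes through unchanged: for each $x\in F$, using that $\overline B$ is H-closed one finds $V_x\in{\cal V}_x$ with $V_x\cap\overline B=\varnothing$; $\{V_x:x\in F\}$ covers the regular-closed set $F$, so by heredity of $wL$ to regular-closed sets there is ${\cal M}\in[\{V_x:x\in F\}]^{\le\kappa}$ with $F\subseteq\overline{\bigcup{\cal M}}$; choosing $\alpha$ with ${\cal M}\in[\bigcup\{{\cal V}_x:x\in\overline{U_\alpha}\}]^{\le\kappa}$ and noting $X\setminus\overline{\bigcup{\cal M}}\ne\varnothing$ (it contains $B$), condition (2) yields $\varnothing\ne B_{\cal M}\subseteq U_{\alpha+1}\setminus\overline{\bigcup{\cal M}}\subseteq F\setminus\overline{\bigcup{\cal M}}=\varnothing$, a contradiction. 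Hence $X=F$ and $|X|\le 2^\kappa$.

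Alternatively, and more economically for the write-up, I would simply note that a space with a $\pi$-base whose elements have compact boundaries is quasiregular (this is the preceding corollary, itself a consequence of Lemma~\ref{rimcpt}), so such a space with an H-closed $\pi$-base is a quasiregular space with an H-closed $\pi$-base, and Theorem~\ref{HclosedQR} applies directly. This is essentially a one-line deduction, which is presumably why the statement is phrased as a corollary.

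I do not expect a genuine obstacle here: the content is entirely in Lemma~\ref{rimcpt} and Theorem~\ref{HclosedQR}, and the only subtlety — making sure that the boundary $\overline B\setminus B$ of a $\pi$-base element is compact so that the Hausdorff separation step in Lemma~\ref{rimcpt} can be carried out with finitely many open sets — is already handled there. The mild point to be careful about is that having compact boundaries is genuinely weaker than having compact closures (indeed the $\pi$-base elements here need not even have compact closures, only H-closed ones), so one must resist the temptation to quote Theorem~\ref{BCG1}; the right tool is Theorem~\ref{HclosedQR} together with quasiregularity supplied by Lemma~\ref{rimcpt}.
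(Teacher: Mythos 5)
Your proposal is correct, and the short route you describe (compact boundaries $\Rightarrow$ quasiregular via Lemma~\ref{rimcpt}, then apply Theorem~\ref{HclosedQR}) is exactly how the paper derives this corollary. The longer rerun of the transfinite recursion is sound but unnecessary.
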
 
	
\bigskip

Now we deal with the class of Urysohn spaces. 
We start with a well known result that characterizes compactness in the semigularization and we give the proof for sake of completeness. 

\begin{theorem}\rm \label{Katetov} (Kat\v etov)
	A space $X$ is Urysohn and H-closed if, and only if, $X_s$ is compact.
\end{theorem}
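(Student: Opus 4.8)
The plan is to prove both implications by working with open filters/ultrafilters and their adherences, exploiting the fact that the topology of $X_s$ is generated by the regular open sets of $X$ and that an open filter on $X$ and the corresponding filter on $X_s$ have the same adherence. First I would recall the two facts I will lean on: (i) a space is H-closed iff every open filter has nonempty adherence, and (ii) for a Urysohn space, for every pair of distinct points $x,y$ there are open $U\ni x$, $V\ni y$ with $\overline U\cap\overline V=\varnothing$. I would also note the elementary identities $\mathrm{int}_{X_s}=\mathrm{int}_X$ on regular open sets and $\mathrm{cl}_{X_s}(A)\subseteq \mathrm{cl}_X(A)$ for all $A$, together with the fact that a basic $X_s$-neighborhood $\mathrm{int}_X(\overline U)$ of a point $x$ contains the $X$-neighborhood $U$.

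For the forward direction, assume $X$ is Urysohn and H-closed; I would show $X_s$ is Hausdorff and every open cover of $X_s$ has a finite subcover. Hausdorffness of $X_s$: given $x\neq y$ in $X$, take $U\ni x$, $V\ni y$ open in $X$ with $\overline U\cap\overline V=\varnothing$ (Urysohn); then $\mathrm{int}_X(\overline U)$ and $\mathrm{int}_X(\overline V)$ are disjoint $X_s$-open sets separating $x$ and $y$. Compactness: let $\mathcal{G}$ be a filter of regular open sets on $X_s$ with the finite intersection property viewed as an open filter on $X$; since $X$ is H-closed its adherence $\bigcap_{G\in\mathcal G}\overline{G}^X$ is nonempty, pick $p$ in it. I would then argue $p$ is an $X_s$-adherence point of $\mathcal G$: a basic $X_s$-neighborhood of $p$ is $\mathrm{int}_X(\overline W)$ for some $X$-open $W\ni p$, and since $p\in\overline{G}^X$ we have $W\cap G\neq\varnothing$, hence $\mathrm{int}_X(\overline W)\cap G\neq\varnothing$ because $G$ is regular open and meets every set meeting $W$... more carefully, $W\cap G\neq\varnothing$ gives $\mathrm{int}_X(\overline W)\cap G\supseteq W\cap G\neq\varnothing$. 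So every open ultrafilter on $X_s$ (equivalently every maximal filter of regular open sets) has nonempty adherence in the Hausdorff space $X_s$, which for a Hausdorff space is equivalent to compactness via the ultrafilter characterization. I would phrase this last step using the standard equivalence: a Hausdorff space is compact iff every open ultrafilter converges, and here the adherence point is unique by Hausdorffness, so convergence follows.

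For the converse, assume $X_s$ is compact (hence Hausdorff). H-closedness of $X$: let $\mathcal U$ be an open cover of $X$; then $\{\mathrm{int}_X(\overline U):U\in\mathcal U\}$ is an $X_s$-open cover of $X_s$ (since $U\subseteq \mathrm{int}_X(\overline U)$), so finitely many $\mathrm{int}_X(\overline{U_1}),\dots,\mathrm{int}_X(\overline{U_k})$ cover $X$; taking $X$-closures gives $\overline{U_1}\cup\cdots\cup\overline{U_k}=X$, which is exactly the operative characterization of H-closedness. For the Urysohn property: take $x\neq y$ in $X$; since $X_s$ is compact Hausdorff it is regular, so there are disjoint $X_s$-open sets whose $X_s$-closures are disjoint, and shrinking to basic form we get regular open $P\ni x$, $Q\ni y$ with $\mathrm{cl}_{X_s}(P)\cap\mathrm{cl}_{X_s}(Q)=\varnothing$; the subtle point is to upgrade this to disjoint $X$-closures. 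Here I would use that for a regular open set $P$ in $X$, $\overline{P}^X=\mathrm{cl}_{X_s}(P)$ is regular closed and that in $X_s$ regular closed sets coincide... actually the clean route is: $\overline{P}^X$ and $\overline{Q}^X$ are regular closed in $X$, and one checks $\overline{P}^X=\mathrm{cl}_{X_s}(\overline{P}^X)$ and that $\mathrm{cl}_{X_s}(P)=\overline{P}^X$ for regular open $P$ since basic $X_s$-neighborhoods are cofinal among $X$-neighborhoods; thus $\overline{P}^X\cap\overline{Q}^X=\mathrm{cl}_{X_s}(P)\cap\mathrm{cl}_{X_s}(Q)=\varnothing$, giving the Urysohn separation.

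The main obstacle I anticipate is precisely this last comparison between $X$-closures and $X_s$-closures of regular open (and regular closed) sets — the inclusion $\mathrm{cl}_{X_s}(A)\subseteq\overline{A}^X$ is automatic, but the reverse inclusion $\overline{P}^X\subseteq\mathrm{cl}_{X_s}(P)$ for regular open $P$ needs the observation that any $X$-open $W$ meeting $\overline P^X$ can be shrunk to $\mathrm{int}_X(\overline W)$ still meeting $P$, which uses $P$ regular open. Getting this lemma stated cleanly (ideally as a one-line remark that regular closed subsets of $X$ are unchanged in $X_s$, and that $\overline{\,\cdot\,}^X$ and $\mathrm{cl}_{X_s}$ agree on regular open sets) is the crux; once it is in hand, both directions are short. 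I would factor it out before the main argument.
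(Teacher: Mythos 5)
There is a genuine gap in the forward direction. Your compactness argument shows only that every open (ultra)filter on $X_s$ has nonempty adherence (equivalently, converges), and you then invoke the equivalence ``a Hausdorff space is compact iff every open ultrafilter converges.'' That equivalence is false: convergence of every open ultrafilter characterizes \emph{H-closedness}, not compactness (the paper itself records exactly this characterization in the terminology section). What your argument actually establishes is that $X_s$ is Hausdorff and H-closed, i.e.\ minimal Hausdorff (since $X_s$ is semiregular), and minimal Hausdorff spaces need not be compact. The filter-transfer technique between $X$ and $X_s$ cannot be pushed further on its own, because full compactness requires adherence points for \emph{arbitrary} filters, while H-closedness of $X$ only supplies them for open filters. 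The missing ingredient is regularity of $X_s$: since compact $=$ H-closed $+$ regular, and $X_s$ is H-closed whenever $X$ is, the whole content of the forward direction is that the Urysohn property forces $X_s$ to be regular. This is where the paper spends its effort: given a regular closed set $A$ and $p\notin A$, it separates $p$ from each $q\in A$ by regular open sets $U_q\ni p$, $V_q\ni q$ with $\overline{U_q}\cap\overline{V_q}=\emptyset$ (Urysohn), then uses the H-closedness of $A$ (a regular closed subset of an H-closed space) to extract a finite $F\subseteq A$ with $A\subseteq\bigcup\{\overline{V_q}:q\in F\}$, yielding disjoint $X_s$-neighborhoods of $p$ and of $A$. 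Your proposal uses the Urysohn hypothesis only to obtain Hausdorffness of $X_s$, which is far too little; without the regularity step the conclusion does not follow.

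Two smaller remarks. The general inclusion you state, $\mathrm{cl}_{X_s}(A)\subseteq \mathrm{cl}_X(A)$, is backwards: $X_s$ carries a coarser topology, so $\mathrm{cl}_X(A)\subseteq \mathrm{cl}_{X_s}(A)$ for every $A$. The two closures do coincide on \emph{open} sets, which is the special case you actually need, and your sketched argument for it (shrinking an $X$-open $W$ disjoint from $P$ to the $X_s$-neighborhood $\mathrm{int}_X(\overline W)$, still disjoint from $P$) is correct. The converse direction of your proposal (compactness of $X_s$ implies $X$ is H-closed and Urysohn) is essentially sound --- the paper does not even write it out --- but it does not compensate for the broken forward implication.
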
 
\begin{proof} 
	Since a space is compact iff it is H-closed and regular it suffices to show that $X_s$ is regular. Let A be a regular-closed subspace of X and $p$ be a point not in $A$. For each $q\in A$, there are disjoint regular-open sets $U_q$ and $V_q$ such that $p\in U_q$, $q \in V_q$, and $\overline{U_q}$ and $ \overline{V_q}$ are disjoint.  There is a finite subset $F$ of $A$ such that $A$ is contained in $V = \bigcup \{\overline{V_q}:q \in F\}$. Let $U = \bigcap_{q\in F}U_q$.  Then $U \cap V = \emptyset$. We have that $V$ is a regular closed set and that $p\in X\setminus \overline{V}\subseteq X\setminus A$. This completes the proof.
\end{proof}
Using the previous theorem, we prove next result.
\begin{lemma}\rm \label{pi-base}
	Let $X$ be a Urysohn space. If there exists an H-closed $\pi$-base in $X$, then there exists a compact $\pi$-base in $X_s$. 
\end{lemma}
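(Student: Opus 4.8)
The plan is to use, as a $\pi$-base of $X_s$, the family $\{int(\overline{B}):B\in{\cal B}\}$, where ${\cal B}$ is a fixed H-closed $\pi$-base of $X$ and all closures and interiors without a subscript are taken in $X$. The point is that each $int(\overline{B})$ is a regular open subset of $X$ (because $\overline{B}$ is regular closed), hence open in $X_s$, and that its closure in $X_s$ turns out to be precisely $\overline{B}$, carrying a compact topology. So the whole argument splits into: (a) $\{int(\overline{B}):B\in{\cal B}\}$ is a $\pi$-base of $X_s$; (b) $cl_{X_s}(int(\overline{B}))=\overline{B}$; (c) $\overline{B}$ with the topology inherited from $X_s$ is compact.

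For (a): a non-empty open set of $X_s$ contains a non-empty regular open set $V$ of $X$; since ${\cal B}$ is a $\pi$-base of $X$ there is $B\in{\cal B}$ with $\varnothing\ne B\subseteq V$, and then $\varnothing\ne B\subseteq int(\overline{B})\subseteq int(\overline{V})=V$, using that $V$ is regular open. For (b): $X\setminus\overline{B}$ is regular open in $X$ (its interior-of-closure is itself, as $\overline{B}$ is regular closed), so $\overline{B}$ is closed in $X_s$; since the topology of $X_s$ is coarser than that of $X$ and $\overline{int(\overline{B})}=\overline{B}$, we get $\overline{B}=\overline{int(\overline{B})}\subseteq cl_{X_s}(int(\overline{B}))\subseteq cl_{X_s}(\overline{B})=\overline{B}$.

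For (c), write $\rho$ for the topology $\overline{B}$ inherits from $X$ and $\sigma$ for the one it inherits from $X_s$; so $\sigma\subseteq\rho$. Here $(\overline{B},\rho)$ is H-closed (it is the closure of a member of ${\cal B}$) and Urysohn (a subspace of $X$). I would first note that $(\overline{B},\sigma)$ is Urysohn: indeed $X_s$ itself is Urysohn, since disjoint closed neighbourhoods in $X$ give, after replacing them by the interiors of their closures, disjoint regular open sets whose $X_s$-closures (equal to their $X$-closures, for regular open sets) are still disjoint. Next, $(\overline{B},\sigma)$ is H-closed: any $\sigma$-open cover is a $\rho$-open cover, so it has a finite subfamily whose union is $\rho$-dense, hence $\sigma$-dense (closures only grow when the topology gets coarser). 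Finally I would identify $\sigma$ with the semiregularization of $\rho$ — equivalently, prove that $\sigma$ is semiregular — by exploiting that $\overline{B}$ is regular closed in $X$ (and in $X_s$); granted this, Theorem \ref{Katetov} applied to the H-closed Urysohn space $(\overline{B},\rho)$ gives that $(\overline{B},\sigma)=(\overline{B},\rho)_s$ is compact. Thus each $cl_{X_s}(int(\overline{B}))=\overline{B}$ is a compact subspace of $X_s$, and $\{int(\overline{B}):B\in{\cal B}\}$ is a compact $\pi$-base of $X_s$.

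The hard part will be precisely this last identification. One cannot get around it cheaply: $\sigma$ is in general not comparable with the semiregularization $\rho_s$ of $\rho$ (a set of the form $V\cap\overline{B}$ with $V$ regular open in $X$ need not be a union of $\rho$-regular-open sets, because of the "boundary of $V$"; and conversely a $\rho$-regular-open set may fail to be the trace on $\overline{B}$ of an open set of $X_s$), so the slogan "a topology coarser than a compact one is compact" does not apply directly to $\sigma$ versus $\rho_s$. The right handle is the behaviour of the semiregularization under regular closed subspaces: the inclusion $(\overline{B},\rho)\hookrightarrow X$ is $\theta$-continuous for trivial reasons, and — using that $\overline{B}$ is regular closed and that $X$ is Urysohn — it descends to a topological embedding of $(\overline{B},\rho)_s$ into $X_s$ whose image is exactly $(\overline{B},\sigma)$; making this precise (or, alternatively, verifying directly that the regular closed subspace $\overline{B}$ of the semiregular space $X_s$ is itself semiregular) is the single nontrivial step, and it is where both the regular-closedness of the $\pi$-base elements' closures and the Urysohn hypothesis are genuinely needed.
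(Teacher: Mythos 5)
Your overall route is the same as the paper's: take $\{int(\overline{B}):B\in{\cal B}\}$ as the candidate $\pi$-base of $X_s$, check it is a $\pi$-base, identify $cl_{X_s}(int(\overline{B}))$ with $\overline{B}$, and invoke Theorem \ref{Katetov} to get compactness. Your steps (a) and (b) are correct and complete, and your verifications that $\overline{B}$ with the topology $\sigma$ inherited from $X_s$ is Urysohn and H-closed are also sound (and are more than the paper writes down).

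The problem is that the decisive step is announced but never carried out. You say you ``would identify $\sigma$ with the semiregularization of $\rho$'' (equivalently, verify that the regular closed subspace $\overline{B}$ of the semiregular space $X_s$ is semiregular), you correctly observe that this is the one nontrivial point and that it cannot be obtained by a cheap comparison of topologies, and then you stop. That identification is exactly what makes Kat\v etov's theorem applicable to the subspace of $X_s$, and it is not a formality: the natural candidate $int_{\overline{B}}(cl_{\overline{B}}(V\cap int(\overline{B})))$ for a $\rho$-regular-open set inside $V\cap\overline{B}$ ($V$ regular open in $X$) can pick up points of $\overline{B}\cap\partial\overline{V}$, so the inclusion $\tau(X_s)|_{\overline{B}}\subseteq(\tau|_{\overline{B}})_s$ needs an actual argument. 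So, as written, your proposal is an incomplete proof --- though in fairness the paper's own proof is silent on precisely the same point (it simply asserts that $cl_s(int(\overline{B}))$ is compact ``by Theorem \ref{Katetov}''). Two remarks that would help you close it: first, you only need the one inclusion $\tau(X_s)|_{\overline{B}}\subseteq(\tau|_{\overline{B}})_s$, because $(\overline{B},(\tau|_{\overline{B}})_s)$ is compact Hausdorff and a Hausdorff topology coarser than a compact Hausdorff one coincides with it; second, the interaction of semiregularization with regular closed subspaces is treated in Porter--Woods \cite{PW}, which is the natural place to extract (or cite) the missing fact rather than reproving it from scratch.
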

\begin{proof}
	Let ${\cal B}$ be an H-closed $\pi$-base in $X$. Since $\overline{B}=\overline{int(\overline{B})}=cl_s(int(\overline{B}))$, where $cl_s(A)$ denotes the closure in the semiregularization $X_s$, by Theorem \ref{Katetov}, we have that $cl_s(int(\overline{B}))$ is compact in $X_s$. We have to prove that $\{int(\overline{B}): B\in {\cal B}\}$ is a $\pi$-base in $X_s$. Fix a basic open set $int(\overline{V})$. There exists $B\in {\cal B}$ such that $B\subseteq V$. Then $int(\overline{B})\subseteq int(\overline{V})$. This completes the proof.
\end{proof}

In \cite{AK} Alas and Kocinac introduced for Hausdorff spaces, the cardinal function $k(X)$ that is the least cardinal $\kappa$ such that for every $x \in X$ there exists a family ${\cal V}_x$ of open neighborhoods of $x$ such that $|{\cal V}_x|\leq \kappa$ and for every regular closed subset $\overline{U}$, containing $x$, there exists $V\in {\cal V}_x$ such that $\overline{V}\subseteq \overline{U}$. It is straightforward that $k(X)\leq \chi(X)$. The proof of the following lemma is once again direct.

\begin{lemma}\rm\cite{AK}\label{kchi}
	$k(X)=\chi(X_s)$ for every space $X$.
\end{lemma}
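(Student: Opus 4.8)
The plan is to move everything to the semiregularization $X_s$ and to use the fact, recalled above, that the regular open sets of $X$ form a base for $X_s$. The first step I would take is a small reformulation of the clause defining $k(X)$: for a family ${\cal V}_x$ of open neighbourhoods of a point $x$, that clause is equivalent to requiring that for every open set $U$ with $x\in U$ there be $V\in{\cal V}_x$ with $\overline{V}\subseteq\overline{U}$. One direction is immediate, since $\overline{U}=\overline{int(\overline{U})}$ for every open $U$, so the closure of an open set is already regular closed; for the other, if $\overline{U}$ is regular closed and $x\in\overline{U}\setminus int(\overline{U})$ then no open neighbourhood $V$ of $x$ can have $\overline{V}\subseteq\overline{U}$ (otherwise $x\in V\subseteq int(\overline{V})\subseteq int(\overline{U})$), so the clause has content only for the regular closed sets $\overline{U}$ with $x\in int(\overline{U})$, and each of those is the closure of the open neighbourhood $int(\overline{U})$ of $x$. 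Along the way I would also use the standard facts that for open $V$ the set $int(\overline{V})$ is regular open (hence $X_s$-open), that $\overline{int(\overline{V})}=\overline{V}$, and that a regular open set $O$ satisfies $int(\overline{O})=O$.

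For the inequality $\chi(X_s)\le k(X)$, I would fix $\kappa=k(X)$, and for each $x$ a witnessing family ${\cal V}_x$ with $|{\cal V}_x|\le\kappa$, and set ${\cal W}_x=\{int(\overline{V}):V\in{\cal V}_x\}$. Each member of ${\cal W}_x$ is a regular open set containing $x$, hence an $X_s$-neighbourhood of $x$, and $|{\cal W}_x|\le\kappa$; to check that ${\cal W}_x$ is a neighbourhood base at $x$ in $X_s$ it suffices to refine an arbitrary basic $X_s$-neighbourhood, that is, a regular open $O\ni x$. Since $O$ is open in $X$ and contains $x$, the reformulation supplies $V\in{\cal V}_x$ with $\overline{V}\subseteq\overline{O}$, whence $int(\overline{V})\subseteq int(\overline{O})=O$, as needed. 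This gives $\chi(X_s)\le\kappa$.

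For the reverse inequality $k(X)\le\chi(X_s)$, I would fix $\lambda=\chi(X_s)$ and, for each $x$, a neighbourhood base ${\cal W}_x$ at $x$ in $X_s$ with $|{\cal W}_x|\le\lambda$ consisting of regular open sets (possible since those form a base for $X_s$). Every member of ${\cal W}_x$ is open in $X$ and contains $x$, so ${\cal W}_x$ is an admissible family of open neighbourhoods of $x$ in $X$; I claim it witnesses $k(X)\le\lambda$. Indeed, given an open $U$ with $x\in U$, the set $int(\overline{U})$ is regular open and contains $x$, hence is an $X_s$-neighbourhood of $x$, so some $O\in{\cal W}_x$ satisfies $O\subseteq int(\overline{U})$, and then $\overline{O}\subseteq\overline{int(\overline{U})}=\overline{U}$; by the reformulation this is exactly what is required. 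Combining the two inequalities yields $k(X)=\chi(X_s)$.

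The proof is largely bookkeeping with the regular open and regular closed sets of $X$ against those of $X_s$, so I do not expect a serious obstacle; the only place that calls for a moment's attention is the reformulation in the first paragraph — specifically, noticing that for a regular closed set containing $x$ on its boundary the defining clause of $k(X)$ cannot be satisfied and so is effectively restricted to closures of honest open neighbourhoods of $x$ — after which both directions of the equality go through by translating between a family ${\cal V}_x$ of open sets in $X$ and the corresponding family $\{int(\overline{V}):V\in{\cal V}_x\}$ of basic open sets in $X_s$.
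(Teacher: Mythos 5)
Your proof is correct, and it is exactly the ``direct'' argument the paper alludes to without writing out: the paper offers no proof of this lemma, and the natural translation $V\mapsto int(\overline{V})$ between families of open neighbourhoods in $X$ and basic (regular open) neighbourhoods in $X_s$, together with the identities $\overline{int(\overline{U})}=\overline{U}$ and $int(\overline{O})=O$ for regular open $O$, is all that is needed. Your preliminary reformulation of the defining clause of $k(X)$ (restricting attention to regular closed sets that are closures of open neighbourhoods of $x$, since the clause is unsatisfiable for a regular closed set containing $x$ only in its boundary) is the right reading of the definition and is handled correctly.
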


\begin{theorem}\rm\label{k}
	Let $X$ be a Urysohn space with an H-closed $\pi$-base. Then $|X|\leq 2^{wL(X)k(X)}$.
\end{theorem}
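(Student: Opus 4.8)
The plan is to transfer the problem to the semiregularization $X_s$ and apply Theorem \ref{BCG1} there. By Lemma \ref{pi-base}, since $X$ is Urysohn and has an H-closed $\pi$-base, the space $X_s$ has a compact $\pi$-base. So Theorem \ref{BCG1} gives $|X_s| \leq 2^{wL(X_s)\, t(X_s)\, \psi_c(X_s)}$, and since $X_s$ has the same underlying set as $X$, this reads $|X| \leq 2^{wL(X_s)\, t(X_s)\, \psi_c(X_s)}$. The goal is then to show that each of the three invariants on the right is bounded by $wL(X)\,k(X)$.

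First I would handle $\psi_c$ and $t$. Since $X$ is Urysohn with an H-closed $\pi$-base, Theorem \ref{BC} gives $|X| \leq 2^{wL(X)\chi(X)}$, but more to the point we need invariant comparisons rather than cardinality ones; a cleaner route: use Lemma \ref{kchi}, which gives $\chi(X_s) = k(X) \leq wL(X)k(X)$. Hence $\psi_c(X_s) \leq \chi(X_s) = k(X)$ and $t(X_s) \leq \chi(X_s) = k(X)$, so both of those terms are already $\leq k(X)$. The only genuinely nontrivial estimate is therefore $wL(X_s)$: we must show $wL(X_s) \leq wL(X)\,k(X)$, or at least $wL(X_s) \leq wL(X)\cdot k(X)$ suffices for the exponent.

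The main obstacle is precisely bounding $wL(X_s)$. The topology of $X_s$ is coarser than that of $X$ (it is generated by regular-open sets of $X$), so a priori $wL(X_s)$ could be \emph{larger} than $wL(X)$ — coarsening can increase the weak Lindelöf number in general. The idea to control it: given an open cover $\mathcal{U}$ of $X_s$ by regular-open sets, refine it. For each $x \in X$ pick $U_x \in \mathcal{U}$ with $x \in U_x$; using $k(X) \leq \kappa$ (write $\kappa = wL(X)k(X)$), fix the witnessing family $\mathcal{V}_x$ with $|\mathcal{V}_x| \leq \kappa$, and since $\overline{U_x} = cl_s(U_x)$ is regular-closed in $X$ containing $x$, choose $V_x \in \mathcal{V}_x$ with $\overline{V_x} \subseteq \overline{U_x}$. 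Now $\{V_x : x \in X\}$ is an open cover of $X$ (in the original topology), so by $wL(X) \leq \kappa$ there is a subfamily $\{V_x : x \in S\}$, $|S| \leq \kappa$, with $\overline{\bigcup_{x\in S} V_x} = X$ (closure in $X$, which equals closure in $X_s$ on regular-closed sets — careful here). Then $X = \overline{\bigcup_{x\in S} V_x} \subseteq \overline{\bigcup_{x\in S} \overline{U_x}} = cl_s\big(\bigcup_{x\in S} cl_s(U_x)\big)$. The subtle point is whether $cl_s$ of a union of $\kappa$-many regular-closed sets equals the $cl_s$ of the union of the corresponding open sets $U_x$: since $cl_s(U_x) = \overline{U_x} \subseteq cl_s(\bigcup_{x \in S} U_x)$ and $cl_s(\bigcup_{x\in S} U_x)$ is closed in $X_s$, we get $\bigcup_{x\in S} cl_s(U_x) \subseteq cl_s(\bigcup_{x\in S} U_x)$, hence $X = cl_s(\bigcup_{x\in S} U_x)$. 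This shows the $\kappa$-sized subfamily $\{U_x : x \in S\}$ of $\mathcal{U}$ is $cl_s$-dense in $X_s$, giving $wL(X_s) \leq \kappa = wL(X)\,k(X)$.

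Putting it together: $wL(X_s)\,t(X_s)\,\psi_c(X_s) \leq (wL(X)k(X)) \cdot k(X) \cdot k(X) = wL(X)\,k(X)$, so Theorem \ref{BCG1} applied to $X_s$ yields $|X| = |X_s| \leq 2^{wL(X_s)t(X_s)\psi_c(X_s)} \leq 2^{wL(X)k(X)}$, as desired. The one place I would be most careful in writing the full proof is the interplay between closures in $X$ and in $X_s$ on regular-closed sets (they agree, since regular-closed sets of $X$ are exactly the $X_s$-closures of $X_s$-open sets, and this is what makes the cover-refinement argument go through); everything else is a routine chain of invariant inequalities.
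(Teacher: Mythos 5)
Your proof is correct and follows essentially the same route as the paper: pass to the semiregularization, obtain a compact $\pi$-base there via Lemma \ref{pi-base}, apply the known cardinality bound to $X_s$, and translate the invariants back using Lemma \ref{kchi}. The only divergence is your treatment of $wL(X_s)$: the worry that coarsening the topology could increase the weak Lindel\"of number is unfounded --- every $\tau_s$-open cover is a $\tau$-open cover and $\tau$-closures are contained in $\tau_s$-closures, so $wL(X_s)\leq wL(X)$ automatically (in fact $wL(X_s)=wL(X)$, which is what the paper invokes) --- hence your cover-refinement argument, though valid, only establishes the weaker estimate $wL(X_s)\leq wL(X)k(X)$ and is unnecessary; it still suffices for the exponent, so the proof goes through.
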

\begin{proof}
	From Lemma \ref{pi-base} it follows that $X_s$ has a compact $\pi$-base. Since $wL(X_s)=wL(X)$ and by Lemma \ref{kchi}, we have that $|X|=|X_s|\leq 2^{wL(X_s)\chi(X_s)}= 2^{wL(X)k(X)}$.
\end{proof}

The following result is assumed without proof in \cite{DP} however a proof of it can be found in \cite{BCG}.
\begin{lemma}\rm\cite{DP}\label{p1}
	Let $X$ be an H-closed space. Then $\chi(X_s)\leq\psi_c(X)$.
\end{lemma}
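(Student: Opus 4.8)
The plan is to exhibit, for each $x \in X$, a local base at $x$ in $X_s$ of cardinality at most $\psi_c(X)$. Fix $x\in X$ and a family $\mathcal{V}_x$ of open neighborhoods of $x$ witnessing $\psi_c(X)$, that is, $|\mathcal{V}_x|\le\psi_c(X)$ and $\bigcap\{\overline{V}:V\in\mathcal{V}_x\}=\{x\}$; as usual we may assume $\mathcal{V}_x$ is closed under finite intersections. The candidate local base is $\mathcal{B}_x=\{\mathrm{int}(\overline{V}):V\in\mathcal{V}_x\}$: each of its members is a regular open set, hence a basic open set of $X_s$, and it contains $x$ because $x\in V\subseteq\mathrm{int}(\overline{V})$; also $|\mathcal{B}_x|\le\psi_c(X)$. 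So it suffices to prove that $\mathcal{B}_x$ is a local base at $x$ in $X_s$.

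To that end I would fix a basic open set $W=\mathrm{int}(\overline{O})$ of $X_s$ with $x\in W$ and set $C=X\setminus W$. Since $W$ is regular open, a short computation gives that $C$ is regular closed in $X$; consequently $C$, with the subspace topology, is H-closed, being a regular closed subspace of the H-closed space $X$.

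The key step --- and the one I expect to be the main obstacle --- is to find some $V\in\mathcal{V}_x$ with $V\subseteq W$. Suppose not. Then $V\cap C\ne\emptyset$ for every $V\in\mathcal{V}_x$, so $\{V\cap C:V\in\mathcal{V}_x\}$ is an open filter base on $C$ (closed under finite intersections because $\mathcal{V}_x$ is). Since $C$ is H-closed, the open filter it generates has nonempty adherence, so there is $p\in\bigcap_{V\in\mathcal{V}_x}\overline{V\cap C}^{C}$. But $\overline{V\cap C}^{C}\subseteq\overline{V}\cap C$ for each $V$, whence $p\in\bigl(\bigcap_{V\in\mathcal{V}_x}\overline{V}\bigr)\cap C=\{x\}\cap C=\emptyset$, a contradiction. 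Hence some $V\in\mathcal{V}_x$ satisfies $V\subseteq W$.

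Finally, from $V\subseteq W$ we get $\overline{V}\subseteq\overline{W}$, and therefore $\mathrm{int}(\overline{V})\subseteq\mathrm{int}(\overline{W})=W$, using that $W$ is regular open. Thus every basic $X_s$-neighborhood of $x$ contains a member of $\mathcal{B}_x$, so $\mathcal{B}_x$ is a local base at $x$ in $X_s$, and $\chi(X_s)\le\psi_c(X)$. The only points requiring care are the verification that $C$ is regular closed and the legitimacy of working with an open filter base on the subspace $C$ rather than on $X$; both follow routinely from the standard facts that regular closed subspaces of H-closed spaces are H-closed and that H-closedness is equivalent to every open filter having nonempty adherence.
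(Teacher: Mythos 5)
Your proof is correct. The paper itself gives no argument for this lemma (it explicitly defers to \cite{BCG}, noting the result is assumed without proof in \cite{DP}), so there is nothing to compare line by line; but your argument --- reducing to the complement $C=X\setminus W$ of a regular open set, observing that $C$ is regular closed and hence H-closed, and deriving a contradiction from the adherent point of the open filter base $\{V\cap C: V\in\mathcal{V}_x\}$ --- is exactly the standard route, and it mirrors the open-filter technique the paper uses in the proof of Theorem \ref{HclosedQR}. All the delicate points you flag (regular closedness of $C$, heredity of H-closedness to regular closed subspaces, and the final step $\mathrm{int}(\overline{V})\subseteq\mathrm{int}(\overline{W})=W$) check out.
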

Since the relations $\psi_c(X)= \psi_c(X_s)\leq\chi(X_s)$ are true for every space $X$, then we can say that $k(X)=\chi(X_s)=\psi_c(X)$ holds for H-closed spaces.
\begin{question}\rm
	Is it true that for Urysohn spaces having an H-closed $\pi$-base $k(X)=\psi_c(X)$?
\end{question}

The following example shows that Theorem \ref{k} is an actual improvement of the bound $|X|\leq2^{wL(X)\chi(X)}$ for Urysohn spaces having an H-closed $\pi$-base (in \cite{BC}).  

\begin{example}\rm
	There exist an Urysohn H-closed space $Z$ such that $k(Z)<\chi(Z)$. 
\end{example}
Consider the space $Z=2^\omega$ with the following topology on it: a basic open neighborhood of a point $\alpha$ is of the form $(U\setminus T)$, where $U$ is a basic open subset in $2^\omega$ containing $\alpha$ and $T$ is a subset of $2^\omega$ such that $|T|\leq \omega$ and $\alpha\not\in T$.
Clearly, $Z$ is Urysohn. We want to show that $Z$ is H-closed. 
Let $\cal U$ be an open cover of $Z$ made by basic open sets of the form $B\setminus S$. Since $2^\omega$ is compact there exist $B_1,...,B_k$ such that $Z=2^\omega \subseteq \bigcup_{i=1}^m B_i\subseteq \bigcup_{i=1}^m\overline{B_i}^{2^\omega}= \bigcup_{i=1}^m\overline{B_i\setminus S_i}^Z$. This proves that $Z$ is H-closed. Then every $\pi$-base is an H-closed $\pi$-base. As an additional remark, we can say that $wL(Z)<\omega$. It is easy to see that $(Z)_s=2^\omega$. Then, by Lemma \ref{kchi}, $k(Z)=\chi(2^\omega)=\omega$. It is straightforward to see that $\chi(Z)=|[2^\omega]^{\leq \omega}|=2^\omega$. $\hfill\triangle$\\

\smallskip
It can be easily proved that $\psi_\theta(X)\leq k(X)$ for every Urysohn space $X$. Then it is worthwhile to pose the following question.
\begin{question}\rm
	Does the inequality $|X|\leq 2^{wL(X)t_\theta(X)\psi_\theta(X)}$ hold for every Urysohn space $X$ with an H-closed $\pi$-base?
\end{question}

\section{Martin's Axiom and spaces with a quasiregular H-closed $\pi$-base} \label{2}

Recall the topological definition of Martin's Axiom for a cardinal $\kappa$,  $\aleph_0\leq\kappa<2^{\aleph_0}$. $MA(\kappa)$ states that for each compact space $X$ with the ccc, if $(U_\alpha:\alpha<\kappa)$ is a family of open dense subsets of $X$, then $\bigcap\{U_\alpha:\alpha<\kappa\}\not=\emptyset$.

\begin{theorem}\rm\cite{PW}\label{MAPW}
	Let $\aleph_0\leq\kappa<2^{\aleph_0}$. The followings are equivalent.
	\begin{enumerate}
		\item $MA(\kappa)$.
		
		\item\label{dense} For each space $X$ with the ccc such that $\{x\in X: x \hbox{ has a compact}$\\ $\hbox{ neighborhood}\}$ is dense in $X$, if $(U_\alpha:\alpha<\kappa)$ is a family of open dense subsets of $X$, then $\bigcap\{U_\alpha:\alpha<\kappa\}$ is a dense subset of $X$.
		
		\item \label{MAposet} For each poset $({\Bbb P}, \leq, 1)$ with the ccc, if $\cal D$ is a family of dense subsets of $\Bbb P$ with cardinality $<\kappa$ then there exists a filter $G\subseteq \Bbb P$ such that for every $D\in \cal D$ one has $D\cap G\not=\emptyset$.
	
	\end{enumerate}
\end{theorem}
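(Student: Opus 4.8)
The plan is to establish the cycle $(1)\Rightarrow(3)\Rightarrow(2)\Rightarrow(1)$, where $(1)$, $(2)$, $(3)$ denote the three assertions of the theorem. The implication $(2)\Rightarrow(1)$ is immediate: a compact space is a compact neighbourhood of each of its points, so any compact ccc space $X$ satisfies the hypothesis of $(2)$; hence if $(U_\alpha:\alpha<\kappa)$ is a family of open dense subsets of $X$, then by $(2)$ the set $\bigcap_{\alpha<\kappa}U_\alpha$ is dense in $X$, in particular non-empty, which is exactly $MA(\kappa)$.

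For $(1)\Rightarrow(3)$ I would use the classical passage from a ccc poset to a compact ccc space. Given a ccc poset $(\mathbb{P},\leq,1)$ and a family $\mathcal{D}$ of dense subsets with $|\mathcal{D}|<\kappa$, let $B=\mathrm{RO}(\mathbb{P})$ be the regular-open (Boolean) completion of $\mathbb{P}$, with its canonical order-preserving map $p\mapsto\widehat p$ of $\mathbb{P}$ onto a dense subset $\widehat{\mathbb{P}}$ of $B$, and let $X=\mathrm{St}(B)$ be the Stone space of $B$; then $X$ is compact, and it is ccc because $\mathbb{P}$ is, the chain condition being preserved under the completion and under passage to the Stone space. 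For $D\in\mathcal{D}$, density of $D$ in $\mathbb{P}$ together with density of $\widehat{\mathbb{P}}$ in $B$ shows that $\{\widehat p:p\in D\}$ is predense in $B$, so the set $U_D$, defined as the union of the basic clopen subsets of $X$ indexed by the elements $\widehat p$ with $p\in D$, is open and dense in $X$. Applying $MA(\kappa)$ to the family $(U_D:D\in\mathcal{D})$ yields a point $x\in\bigcap_{D\in\mathcal{D}}U_D$, which we regard as an ultrafilter on $B$; put $G_0=\{p\in\mathbb{P}:\widehat p\in x\}$. Then $G_0$ meets every $D\in\mathcal{D}$, and $G_0$ is centred in $\mathbb{P}$: if $p_1,\dots,p_n\in G_0$ then $\widehat{p_1}\wedge\cdots\wedge\widehat{p_n}\in x$ is non-zero in $B$, so density of $\widehat{\mathbb{P}}$ gives $r\in\mathbb{P}$ with $\widehat r\leq\widehat{p_1}\wedge\cdots\wedge\widehat{p_n}$, whence for each $i$ every extension of $r$ has a further extension below $p_i$, and a finite recursion produces a common lower bound of $p_1,\dots,p_n$ in $\mathbb{P}$. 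The filter $G$ generated by $G_0$ then meets every $D\in\mathcal{D}$, as required. I expect this to be the main obstacle: one must treat the Boolean completion carefully and, above all, notice that $MA(\kappa)$ only delivers a \emph{centred} set $G_0$, the conversion of centredness into a genuine filter being exactly where the density of $\mathbb{P}$ in $B$ is used.

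For $(3)\Rightarrow(2)$, let $X$ be a ccc space in which the set $L$ of points possessing a compact neighbourhood is dense, let $(U_\alpha:\alpha<\kappa)$ be open dense subsets of $X$, and fix a non-empty open $O\subseteq X$; we must find a point of $O\cap\bigcap_{\alpha<\kappa}U_\alpha$. Taking a point of $L$ inside $O$ and using the regularity of one of its compact Hausdorff neighbourhoods, first choose a non-empty open $W_0$ with $\overline{W_0}$ compact and $\overline{W_0}\subseteq O$. Let $\mathbb{P}$ be the poset of all non-empty open subsets of $W_0$ ordered by inclusion; since $W_0$ is open in $X$, two members of $\mathbb{P}$ are incompatible in $\mathbb{P}$ exactly when they are disjoint, so the ccc of $X$ makes $\mathbb{P}$ ccc. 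For each $\alpha$, the set $D_\alpha=\{V\in\mathbb{P}:\overline V\subseteq U_\alpha\}$ is dense in $\mathbb{P}$: given $V\in\mathbb{P}$, the set $V\cap U_\alpha$ is non-empty and open in the compact Hausdorff (hence regular) space $\overline{W_0}$, so it contains a non-empty open $V'\in\mathbb{P}$ with $\overline{V'}\subseteq V\cap U_\alpha$ (closures computed in $\overline{W_0}$ and in $X$ agree for subsets of $W_0$), and $V'\in D_\alpha$ refines $V$. By $(3)$ there is a filter $G\subseteq\mathbb{P}$ meeting every $D_\alpha$. Since $G$ is a filter, $\{\overline V:V\in G\}$ is a family of closed subsets of the compact set $\overline{W_0}$ with the finite intersection property, hence has a common point $x$; for each $\alpha$, choosing $V\in G\cap D_\alpha$ gives $x\in\overline V\subseteq U_\alpha$, and moreover $x\in\overline{W_0}\subseteq O$ because $W_0$, the largest element of $\mathbb{P}$, belongs to $G$. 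Thus $x\in O\cap\bigcap_{\alpha<\kappa}U_\alpha$, which proves $(2)$. The only points needing care here are the reduction of incompatibility to disjointness and the coincidence of closures taken in $X$ with those taken in $\overline{W_0}$; otherwise this implication is a routine variant of the standard derivation of the compact-space form of Martin's Axiom from its poset form.
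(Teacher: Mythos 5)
The paper does not actually prove this theorem---it is quoted from Porter--Woods---so your proposal must be judged on its own merits rather than against an argument in the text. Your implications $(2)\Rightarrow(1)$ and $(3)\Rightarrow(2)$ are correct: the first is the trivial specialization to a compact ccc space, and the second is the standard argument with the poset of non-empty open subsets of a set $W_0$ whose closure is compact and contained in the given open set $O$; your remarks about incompatibility reducing to disjointness and about closures computed in $X$ versus in $\overline{W_0}$ are exactly the points that need checking, and they check out. (A side remark, which is the paper's inconsistency rather than yours: item (3) allows only $<\kappa$ dense sets while items (1) and (2) involve $\kappa$ many open dense sets, so $(3)\Rightarrow(2)$ as you run it uses $\kappa$ many dense sets $D_\alpha$; the paper commits the same abuse in proving Theorem \ref{MAqHclo}, so presumably ``$\leq\kappa$'' is intended in (3).)

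The implication $(1)\Rightarrow(3)$ has a genuine gap at its final step. Everything up to the construction of $G_0=\{p:\hat p\in x\}$ is sound, including your verification that $G_0$ is centred (the finite recursion producing an actual common lower bound in ${\Bbb P}$ is correct). But you then invoke ``the filter $G$ generated by $G_0$,'' and in a general poset a centred set need not generate, nor even be contained in, a filter in the sense required here (downward directed and upward closed): the natural candidate, the upward closure of the set of common lower bounds of finite subsets of $G_0$, can contain incompatible elements. Nor is $G_0$ itself directed in general: for $p,q\in G_0$ the set $\{\hat r:r\le p,\ r\le q\}$ is predense below $\hat p\wedge\hat q$, but $x$ is merely an ultrafilter on $B$, not a generic one, so it need not contain any member of that set. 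Concretely, take a countable ccc poset in which $a$ and $b$ have infinitely many pairwise incompatible common lower bounds $c_n$ and nothing else below both, and let $x$ concentrate on $\bigvee_n\hat c_n$ without containing any $\hat c_n$: then $a,b\in G_0$ but no common lower bound of $a,b$ lies in $G_0$. The density of $\hat{{\Bbb P}}$ in $B$, which you say effects the conversion from centredness to a filter, in fact only yields centredness. The standard repair is genuinely additional work: first replace ${\Bbb P}$ by a subposet ${\Bbb Q}$ of cardinality at most $\kappa$ chosen so that compatibility in ${\Bbb Q}$ agrees with compatibility in ${\Bbb P}$ (hence ${\Bbb Q}$ is ccc) and each $D\cap{\Bbb Q}$ is dense in ${\Bbb Q}$; then adjoin the dense sets $D_{p,q}=\{r:r\le p\hbox{ and }r\le q\}\cup\{r:r\perp p\}\cup\{r:r\perp q\}$ for $p,q\in{\Bbb Q}$. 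A linked set meeting every $D_{p,q}$ is automatically directed, and its upward closure is the required filter. Without this (or some equivalent device) the implication is not established.
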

\begin{theorem}\rm\label{MAqHclo}
	Let $\aleph_0\leq\kappa<2^{\aleph_0}$. The following is equivalent to the conditions in Theorem \ref{MAPW}.
	\begin{itemize}
		\item[4.]\label{quasiregHclos} For each quasiregular H-closed space $X$ with the ccc, if $(U_\alpha:\alpha<\kappa)$ is a family of open dense subsets of $X$, then $\bigcap\{U_\alpha:\alpha<\kappa\}\not=\emptyset$.
	\end{itemize}
\end{theorem}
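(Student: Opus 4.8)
The plan is to prove Theorem \ref{MAqHclo} by showing the cycle of implications that ties condition 4 into the equivalences of Theorem \ref{MAPW}. The easy direction is that condition 2 (or condition 1) implies condition 4: if $X$ is a quasiregular H-closed space with the ccc, then in particular $X$ is quasiregular, so the set of points with a compact neighborhood... but wait, H-closed quasiregular spaces need not be locally compact. So instead I would pass to the semiregularization. First I would note that $X_s$ has the ccc whenever $X$ does, since $X$ and $X_s$ have the same regular open sets and hence the same Boolean algebra of regular open sets, which controls the ccc. Next, since quasiregularity passes to the semiregularization and $X$ is H-closed, I would like to invoke a Kat\v etov-type fact; but $X$ need not be Urysohn, so $X_s$ need not be compact. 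The right move is to observe that for a quasiregular H-closed space, the semiregularization $X_s$ is still H-closed (regular-closedness and H-closedness interact well with semiregularization) and that $X_s$ is regular only if $X$ was Urysohn. So I would not go through $X_s$ in full; instead I would use the characterization of Martin's Axiom via posets, condition 3.

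The cleanest route: to prove 4 from 3, take the poset $\Bbb{P}$ of nonempty regular open subsets of $X$ ordered by inclusion, with maximum $X$. This is exactly the standard poset $RO(X)$; it has the ccc precisely because $X$ does. Given the family $(U_\alpha : \alpha<\kappa)$ of open dense subsets of $X$, for each $\alpha$ let $D_\alpha = \{ p \in \Bbb{P} : \overline{p} \subseteq U_\alpha \}$; using quasiregularity of $X$ (every nonempty open set contains the closure of a nonempty open, hence the closure of a nonempty regular open) together with density of $U_\alpha$, one checks each $D_\alpha$ is dense in $\Bbb{P}$. Condition 3 then yields a filter $G$ meeting every $D_\alpha$. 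The filter $G$ is a filter base of nonempty regular open sets of $X$, which generates an open filter on $X$; because $X$ is H-closed, this open filter has nonempty adherence, so there is a point $x$ in $\bigcap\{\overline{p} : p \in G\}$. Since $G$ meets each $D_\alpha$, for every $\alpha$ there is $p_\alpha \in G$ with $\overline{p_\alpha}\subseteq U_\alpha$, whence $x \in U_\alpha$. Thus $\bigcap_{\alpha<\kappa} U_\alpha \neq \emptyset$, giving condition 4.

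For the converse, 4 implies 1: recall $MA(\kappa)$ (condition 1) asks for the conclusion only for compact ccc spaces $X$. Every compact Hausdorff space is regular, hence quasiregular, and compact implies H-closed; so condition 4 applied to such $X$ immediately gives condition 1. Hence $4 \Rightarrow 1 \Rightarrow 2 \Rightarrow$ (via condition 3 and the poset argument) $4$, closing the loop, and by Theorem \ref{MAPW} condition 4 is equivalent to all of 1, 2, 3.

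The main obstacle I expect is the density verification of the sets $D_\alpha$ in $RO(X)$ and the passage from ``filter meeting all $D_\alpha$'' to ``point in the intersection'': one must be careful that a filter on the poset $RO(X)$ really does generate a genuine open filter on $X$ with the finite intersection property (it does, since elements of a poset-filter are pairwise compatible and in $RO(X)$ compatibility of $p,q$ means $p \cap q \neq \emptyset$, so finite intersections of members of $G$ are nonempty), and then to invoke the H-closedness characterization ``every open filter has nonempty $\theta$-adherence'' correctly, obtaining a point $x$ with $\overline{p}$ containing $x$ for all $p \in G$ rather than merely $x$ in the closure of $\bigcup G$. Quasiregularity is exactly what makes $D_\alpha$ dense, which is why the hypothesis cannot be dropped, consistent with Example \ref{exe}.
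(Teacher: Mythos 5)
Your proof is correct. The substantive direction, $3 \Rightarrow 4$, is essentially the paper's argument: the paper uses the poset $(\tau\setminus\{\emptyset\},\subseteq,X)$ of \emph{all} nonempty open sets rather than $RO(X)$, defines ${\cal E}_\alpha=\{U: \overline{U}\subseteq U_\alpha\}$, and proceeds exactly as you do --- quasiregularity plus density of $U_\alpha$ makes ${\cal E}_\alpha$ dense in the poset, the generic filter is an open filter (base) on $X$, and H-closedness yields a point of its adherence lying in each $\overline{F_\alpha}\subseteq U_\alpha$. Your choice of $RO(X)$ works equally well, at the cost of the small extra check that compatibility in $RO(X)$ is nonempty intersection (true, since the intersection of two regular open sets is regular open); with the poset of all nonempty open sets that check is immediate and ccc of the poset is literally ccc of the space. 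Where you genuinely diverge is the converse: you close the loop with the one-line observation that a compact Hausdorff ccc space is regular, hence quasiregular, and H-closed, so $4\Rightarrow 1$ directly. The paper instead defers the converse to the following theorem, proving $4\Rightarrow 5\Rightarrow 7\Rightarrow 6\Rightarrow 2$; that route is longer but serves to establish the further equivalent conditions $5$--$7$, which your shortcut does not produce (nor does it need to for this statement). Your opening digression about semiregularization is correctly abandoned and harmless, and your concern about passing from a poset filter to a genuine open filter with the finite intersection property is resolved exactly as you say, by directedness of the filter.
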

\begin{proof}
	3.$\implies$4. Let $(X, \tau)$ be a quasiregular H-closed space having the ccc. Let $(U_\alpha:\alpha<\kappa)$ be a family of open dense subsets of $X$. Consider the following poset $({\Bbb P}, \leq,1)=(\tau\setminus\{\emptyset\},\subseteq, X)$. For each $\alpha <\kappa$ construct a family ${\cal E}_\alpha=\{U\in {\Bbb P}: \overline{U}\subseteq U_\alpha\}$. For the quasiregularity of $X$, the families ${\cal E}_\alpha $, $\alpha<\kappa$, are non-empty and since $U_\alpha$ is an open dense subset of $X$, ${\cal E}_\alpha$ is dense in $\Bbb P$ in the sense of posets. Then there exist a filter ${\cal F}\subseteq {\Bbb P}$, which is an open filter on $X$ such that ${\cal E}_\alpha\cap {\cal F}\not=\emptyset$ for every $\alpha <\kappa$. Then for every $\alpha <\kappa$ choose $F_\alpha \in {\cal E}_\alpha\cap {\cal F}$. Since $X$ is H-closed the adherence of $\cal F$ is non-empty, then there exists $x\in \bigcap\{\overline{F}: F\in{\cal F}\}$. Therefore $x\in \bigcap\{\overline{F_\alpha}: \alpha<\kappa\} \subseteq \bigcap\{U_\alpha: \alpha<\kappa\}$.\\
	The other implication is proved in the following theorem.	
\end{proof}

\begin{theorem}\rm
	Let $\aleph_0\leq\kappa<2^{\aleph_0}$. The followings are equivalent to the statements of Theorem \ref{MAPW} and Theorem \ref{MAqHclo}.
	\begin{enumerate}
		\item[5.]\label{denseqrH} For each space $X$ with the ccc such that $\{x\in X: x \hbox{ has a quasiregular}$\\ $\hbox{H-closed neighborhood}\}$ is dense in $X$, if $(U_\alpha:\alpha<\kappa)$ is a family of open dense subsets of $X$, then $\bigcap\{U_\alpha:\alpha<\kappa\}$ is a dense subset of $X$.
		
		\item[6.] \label{comp-pibase} For each space $X$ with the ccc and a compact $\pi$-base, if $(U_\alpha:\alpha<\kappa)$ is a family of open dense subsets of $X$, then $\bigcap\{U_\alpha:\alpha<\kappa\}$ is dense in $X$.
		
		\item[7.] \label{quasiregHclos-pibase} For each space $X$ with the ccc and a quasiregular H-closed $\pi$-base, if $(U_\alpha:\alpha<\kappa)$ is a family of open dense subsets of $X$, then $\bigcap\{U_\alpha:\alpha<\kappa\}$ is dense in $X$.
	\end{enumerate}
\end{theorem}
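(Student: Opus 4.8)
The plan is to close a cycle of implications. We already have $1 \Leftrightarrow 2 \Leftrightarrow 3$ from Theorem~\ref{MAPW} and $3 \Rightarrow 4$ from Theorem~\ref{MAqHclo}; the missing link $4 \Rightarrow 3$ is promised here. So the natural strategy is: first prove $4 \Rightarrow 3$ (or more conveniently $4 \Rightarrow 1$), then slot the three new statements 5, 6, 7 into the chain by showing, say, $2 \Rightarrow 5 \Rightarrow 6$ and $2 \Rightarrow 7$ together with $7 \Rightarrow 4$ (and $6 \Rightarrow 4$ as well, since a compact space trivially has a compact, hence quasiregular H-closed, $\pi$-base). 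Concretely I would prove the cycle $1 \Rightarrow 2 \Rightarrow 5 \Rightarrow 6 \Rightarrow 4 \Rightarrow 1$ together with the side implications $2 \Rightarrow 7$ and $7 \Rightarrow 6$ (or $7 \Rightarrow 4$), so that statement 7 hangs off the main loop.

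For $4 \Rightarrow 1$: given a compact ccc space $X$ and $\kappa$-many open dense sets $U_\alpha$, note that a compact Hausdorff space is regular, hence quasiregular, and compact hence H-closed; thus statement~4 applies verbatim and yields $\bigcap_{\alpha<\kappa} U_\alpha \neq \emptyset$, which is exactly $MA(\kappa)$. This is the easy direction and completes the core equivalence $1 \Leftrightarrow 2 \Leftrightarrow 3 \Leftrightarrow 4$.

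For the implications into 5, 6, 7, the key tool is the passage to the semiregularization together with Kat\v etov's theorem (Theorem~\ref{Katetov}) and Lemma~\ref{pi-base}. For $2 \Rightarrow 5$: if the set of points with a quasiregular H-closed neighborhood is dense in $X$, I want to produce a dense set of points having a \emph{compact} neighborhood in a suitable related space, or else argue directly. The cleanest route is to observe that a quasiregular H-closed neighborhood $N$ of $x$, after semiregularizing, becomes compact by Kat\v etov — but one must be careful, since $X$ itself is only Hausdorff and need not be Urysohn, so Kat\v etov does not apply to $N$ unless $N$ is Urysohn. I expect this to be the main obstacle: reconciling the ``quasiregular H-closed'' hypothesis (which does not give Urysohn) with the tools that want Urysohn. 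The fix is probably to route everything through statement~3 (the poset form) instead: given a ccc space $X$ with a quasiregular H-closed $\pi$-base $\mathcal B$ and open dense $U_\alpha$, form the poset $\mathbb P = \tau\setminus\{\emptyset\}$ ordered by inclusion, refined so that its elements are members of $\mathcal B$ with closure inside some $U_\alpha$; quasiregularity of each $\overline B$ (via Lemma~\ref{qr}, $X$ is quasiregular) makes the sets $\mathcal E_\alpha = \{B \in \mathcal B : \overline B \subseteq U_\alpha\}$ dense in $\mathbb P$, and ccc of $X$ gives ccc of $\mathbb P$. Statement~3 then yields a filter meeting every $\mathcal E_\alpha$; picking $F_\alpha$ in the filter with $\overline{F_\alpha} \subseteq U_\alpha$ and using that the filter extends to an open ultrafilter on some $\overline{B_0} \in \mathcal B$, H-closedness of $\overline{B_0}$ forces a cluster point $x \in \bigcap \overline{F_\alpha} \subseteq \bigcap U_\alpha$, and by varying the starting $B_0$ over a $\pi$-base one gets density of the intersection. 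This is essentially the argument already used in the proof of Theorem~\ref{MAqHclo}, adapted to the local/$\pi$-base setting, and it handles 6 and 7 uniformly (statement~6 being the special case $\mathcal B$ compact). The remaining routine implications — $5 \Rightarrow 6$ (a compact $\pi$-base makes the points with compact neighborhood dense, and compact neighborhoods are quasiregular H-closed) and $6 \Rightarrow 4$ or $7 \Rightarrow 4$ (restrict to the whole space as its own $\pi$-base when $X$ is already compact, resp. H-closed) — I would dispatch in a line each. The one genuine subtlety to watch throughout is that "dense subset of $X$" in statements 5--7 is stronger than "nonempty", so every application of the H-closed-ultrafilter argument must be localized to an arbitrary basic open set, which the $\pi$-base hypothesis makes possible.
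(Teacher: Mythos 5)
Your overall strategy (close a cycle through 1--7 using a localized poset/open-ultrafilter argument plus the routine ``all nonempty open sets form a $\pi$-base'' observations) is workable, and the two substantive pieces you sketch --- $4\Rightarrow 1$, and the $MA$-for-posets argument run below a fixed $\pi$-base element with an extra dense set $\{B\in{\cal B}:\overline{B}\subseteq U\}$ to force the cluster point into the prescribed open set $U$ --- are correct. But one link in your main loop fails as stated: $6\Rightarrow 4$. Statement 6 hypothesizes a \emph{compact} $\pi$-base, while the space in statement 4 is only quasiregular and H-closed; such a space does have a quasiregular H-closed $\pi$-base (all nonempty open sets, since the closure of an open set is regular closed, hence H-closed and quasiregular), but there is no reason for it to have a \emph{compact} one --- H-closedness of $\overline{V}$ does not give compactness without regularity, and Kat\v{e}tov's theorem is unavailable without the Urysohn hypothesis, as you yourself observe earlier. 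So ``restrict to the whole space as its own $\pi$-base'' yields $7\Rightarrow 4$ and, separately, $6\Rightarrow 1$ (in a compact space every closure of an open set is compact), but not $6\Rightarrow 4$. With that arrow removed, statements 5 and 6 have no path back into the core $1$--$4$, so their equivalence is not established. The repair is one line: replace $6\Rightarrow 4$ by $6\Rightarrow 2$ (a dense set of points with compact neighborhoods yields a compact $\pi$-base by intersecting those neighborhoods with a given open set) or by $6\Rightarrow 1$. A minor point: $\{X\}$ is never literally a $\pi$-base unless the topology is trivial; you mean the family of \emph{all} nonempty open sets.

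For comparison, the paper's decomposition is lighter than yours: it proves $4\Rightarrow 5$ directly by localizing statement 4 itself --- given $x\in W\cap D$ with a quasiregular H-closed neighborhood $\overline{V}$, the regular closed set $\overline{V\cap W}$ is again a quasiregular H-closed ccc space to which statement 4 applies --- and then $5\Rightarrow 7\Rightarrow 6\Rightarrow 2$ are each one-line ``a $\pi$-base gives a dense set of points with good neighborhoods'' observations, closing the cycle at 2. This reuses Theorem \ref{MAqHclo} instead of rerunning the poset argument; your route redoes that argument locally, which also works but duplicates content and, as written, still leaves the exit from statement 6 broken.
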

\begin{proof}
	4.$\implies$5. Let $X$ be a space with the ccc and let $D=\{x\in X: x$  has a quasiregular H-closed neighborhood$\}$ be a dense subset in $X$. Let $(U_\alpha: \alpha <\kappa)$ be a family of open dense subsets of $X$. Fix a non-empty open subset $W$ of $X$, then there exist $x\in W\cap D$ and a open neighborhood $V$ of $x$ such that $\overline{V}$ is quasiregular and H-closed. Therefore $Y=\overline{V\cap W}$ is a quasiregular H-closed space with the ccc. Moreover $U_\alpha\cap(W\cap V)$, for each $\alpha<\kappa$ is a non-empty open dense subset of $Y$. Then by hypotesis, $\emptyset\not=\bigcap\{U_\alpha \cap (W\cap V): \alpha <\kappa\}\subseteq \bigcap\{U_\alpha: \alpha <\kappa\} \cap W$.\\
	5.$\implies$7. Notice that if there exists a quasiregular H-closed $\pi$-base $\cal B$ then $\bigcup {\cal B}$ is a dense subset of $X$ and every point in $\bigcup{\cal B}$ has a quasiregular H-closed neighborhood.\\
	7.$\implies$6. is trivial.\\                          
	6.$\implies$2. Notice that if there exists a compact $\pi$-base $\cal B$ then $\bigcup {\cal B}$ is a dense subset of $X$ and every point in $\bigcup{\cal B}$ has a compact neighborhood.
\end{proof}

As a corollary one can obtain the following.

\begin{corollary}\rm
	If $X$ is a space with the ccc and a quasiregular H-closed $\pi$-base, then $X$ is Baire.
\end{corollary}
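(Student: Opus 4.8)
The plan is to obtain the corollary as the $\kappa=\aleph_0$ instance of condition 7 of the last theorem. The first step is to record that $MA(\aleph_0)$ holds in ZFC: a compact Hausdorff space is a Baire space, for if $U_0,U_1,\dots$ are dense open and $W$ is non-empty open, one chooses recursively non-empty open sets with $\overline{W_0}\subseteq W\cap U_0$ and $\overline{W_{n+1}}\subseteq W_n\cap U_{n+1}$ (using regularity of compact Hausdorff spaces), and then $\emptyset\neq\bigcap_n\overline{W_n}\subseteq W\cap\bigcap_n U_n$ by compactness. In particular, for every compact space with the ccc the intersection of countably many dense open subsets is non-empty, i.e. condition 1 of Theorem \ref{MAPW} holds for $\kappa=\aleph_0$ (and $\aleph_0$ trivially satisfies $\aleph_0\le\kappa<2^{\aleph_0}$).

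The second step is purely formal: by the chain of equivalences in Theorem \ref{MAPW}, Theorem \ref{MAqHclo}, and the subsequent theorem that adjoins conditions 5, 6, 7, the value $\kappa=\aleph_0$ also satisfies condition 7. Spelled out, this says that for every space $X$ with the ccc and a quasiregular H-closed $\pi$-base, every countable family $(U_n:n<\omega)$ of open dense subsets of $X$ has $\bigcap_{n<\omega}U_n$ dense in $X$. Since this is precisely the statement that $X$ is a Baire space, the corollary follows. There is no genuine difficulty here; the only point requiring care is the observation that the $\kappa=\aleph_0$ case of the topological Martin's axiom is not an extra hypothesis but a theorem of ZFC.

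As a remark I would point out that the ccc assumption can in fact be omitted, in agreement with the claim in the abstract, by a direct argument: given a quasiregular space $X$ with an H-closed $\pi$-base ${\cal B}$, a non-empty open $W$, and dense open sets $(D_n)_{n<\omega}$, one uses quasiregularity of $X$ together with the fact that ${\cal B}$ is a $\pi$-base to pick recursively non-empty $B_n\in{\cal B}$ with $\overline{B_0}\subseteq W\cap D_0$ and $\overline{B_{n+1}}\subseteq B_n\cap D_{n+1}$. Then $\{B_n:n<\omega\}$ is a decreasing open filter base on the H-closed subspace $\overline{B_0}$; the main step is to extend it to an open ultrafilter on $\overline{B_0}$, which converges (by H-closedness) to some $p\in\overline{B_0}$, and since $\overline{B_0}$ is closed in $X$ the convergence forces $p$ into the closure of each $B_n$, hence $p\in\bigcap_n\overline{B_n}\subseteq W\cap\bigcap_n D_n$. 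Thus any quasiregular space with an H-closed $\pi$-base is Baire, which subsumes the corollary.
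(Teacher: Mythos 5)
Your derivation is correct and is exactly the route the paper intends: the corollary is the $\kappa=\aleph_0$ instance of condition 7 of the preceding theorem, which holds in ZFC because $MA(\aleph_0)$ (condition 1) is just the Baire category theorem for compact Hausdorff spaces. Your closing remark that the ccc hypothesis can be dropped by a direct filter/adherence argument on an H-closed closure is also sound and coincides with the paper's Theorem \ref{Baire} and the corollary that follows it.
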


Actually we can remove the hypotesis ccc and obtain the following.

\begin{theorem}\rm\label{Baire}
	Let $X$ be a space with a quasiregular H-closed $\pi$-base. Then $X$ is a Baire space.
\end{theorem}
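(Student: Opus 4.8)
The plan is to use the Banach--Mazur game characterization of Baireness stated in the introduction: it suffices to show that \textsc{Alice} has no winning strategy in the Banach--Mazur game on $X$. Let $\sigma$ be an arbitrary strategy for \textsc{Alice}; I must produce a play consistent with $\sigma$ in which \textsc{Bob} wins, i.e.\ $\bigcap_{n\in\omega}V_n\neq\emptyset$. The key idea is to let \textsc{Bob} always reply \emph{inside the $\pi$-base}, refining the $\pi$-base elements so that the closures nest, and then exploit H-closedness of one of those closures to extract a point of the intersection.

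First I would fix a $\pi$-base $\mathcal{B}$ whose elements have H-closed closures. Given \textsc{Alice}'s opening move $U_0 = \sigma(\emptyset)$, \textsc{Bob} picks $B_0\in\mathcal{B}$ with $B_0\subseteq U_0$ and plays $V_0 = B_0$. Inductively, once the partial play $(U_0,V_0,\dots,U_n,V_n)$ consistent with $\sigma$ has been built with $V_i = B_i\in\mathcal{B}$, \textsc{Alice} plays $U_{n+1}=\sigma(U_0,\dots,V_n)\subseteq V_n = B_n$, and \textsc{Bob} chooses $B_{n+1}\in\mathcal{B}$ with $B_{n+1}\subseteq U_{n+1}$; but here I want more: I would like $\overline{B_{n+1}}\subseteq B_n$. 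This is exactly where quasiregularity enters via Lemma~\ref{qr}/Corollary~\ref{equi}: since $X$ is quasiregular (it has a $\pi$-base with quasiregular closures — indeed H-closed closures are regular, in fact compact-like, so certainly quasiregular), for the nonempty open set $U_{n+1}$ there is a nonempty open $W$ with $\overline{W}\subseteq U_{n+1}$, and then a $\pi$-base element $B_{n+1}\in\mathcal{B}$ with $B_{n+1}\subseteq W$, so $\overline{B_{n+1}}\subseteq\overline{W}\subseteq U_{n+1}\subseteq B_n$. Set $V_{n+1}=B_{n+1}$.

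This produces a decreasing sequence of closures $\overline{B_0}\supseteq\overline{B_1}\supseteq\overline{B_2}\supseteq\cdots$ with $\overline{B_{n+1}}\subseteq B_n$, all lying inside the H-closed space $K=\overline{B_0}$. Now I would argue that $\bigcap_{n}\overline{B_n}\neq\emptyset$ using H-closedness of $K$. Consider the open filter base on $K$ generated by $\{B_n : n\in\omega\}$ (it is a filter base since the $B_n$ are nested and nonempty); extend it to an open ultrafilter $\mathcal{U}$ on $K$. Since $K$ is H-closed, $\mathcal{U}$ converges to some point $p\in K$. Then for each $n$, because $B_{n+1}\in\mathcal{U}$ and $\mathcal{U}\to p$, every neighborhood of $p$ meets $B_{n+1}$, so $p\in\overline{B_{n+1}}^{K}\subseteq B_n$; hence $p\in\bigcap_n B_n\subseteq\bigcap_n V_n$. (Alternatively: the adherence of the open filter $\{B_n\}$ in the H-closed space $K$ is nonempty, and by the nesting $\overline{B_{n+1}}\subseteq B_n$ the adherence is contained in $\bigcap_n B_n$.) Therefore $\bigcap_n V_n\neq\emptyset$, so the play is a win for \textsc{Bob}, and $\sigma$ is not winning. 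Since $\sigma$ was arbitrary, \textsc{Alice} has no winning strategy, so by the Banach--Mazur--Oxtoby theorem $X$ is Baire.

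The only genuinely delicate point is the passage from "$\mathcal{U}$ converges to $p$'' (or "the open filter has nonempty adherence'') to "$p\in\bigcap_n B_n$'' rather than merely $p\in\bigcap_n\overline{B_n}$; this is precisely what the strict nesting $\overline{B_{n+1}}\subseteq B_n$ buys us, and that nesting in turn is exactly what quasiregularity (Corollary~\ref{equi}) provides. I expect the rest to be routine bookkeeping about strategies in the Banach--Mazur game; no transfinite recursion is needed since the game has length $\omega$. One should double-check the elementary fact that an H-closed closure $\overline{B}$, viewed as a subspace, is itself H-closed — this is standard (regular-closed subsets of H-closed spaces are H-closed, and $\overline{B}=\overline{\mathrm{int}\,\overline{B}}$ when $B$ is open), and is used freely elsewhere in the paper.
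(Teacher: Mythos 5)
Your argument is correct and its engine is the same one the paper uses: quasiregularity produces a decreasing sequence of nonempty open sets with $\overline{B_{n+1}}\subseteq B_n$, all contained in the H-closed set $\overline{B_0}$, and the nonempty adherence of the resulting open filter on $\overline{B_0}$ (equivalently, the convergence of an open ultrafilter extending it) yields a point of $\bigcap_n B_n$. The difference is the packaging. The paper argues directly that a countable intersection of dense open sets meets every nonempty open set, and it performs the nesting \emph{inside} the subspace $\overline{B}$ using the quasiregularity of that subspace; you perform the nesting in $X$ itself, after transferring quasiregularity to $X$ via Corollary~\ref{equi}, and you phrase everything through the Banach--Mazur game. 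Your phrasing actually buys something: the responses you prescribe for {\sc Bob} do not depend on a fixed strategy of {\sc Alice}, so what you have written is a winning strategy for {\sc Bob}, i.e.\ your argument shows that $X$ is Choquet (weakly $\alpha$-favorable), a conclusion strictly stronger than Baire and one that speaks more directly to Question~\ref{alpha}. (Your closing worry about whether ``an H-closed closure is H-closed as a subspace'' is vacuous: H-closedness is an intrinsic property of a Hausdorff space, and the hypothesis is precisely that the subspace $\overline{B}$ is H-closed.)

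One side remark in your proof must be struck out: you justify the quasiregularity of $X$ in part by asserting that ``H-closed closures are regular, in fact compact-like, so certainly quasiregular.'' This is false. H-closed Hausdorff spaces need not be regular, nor even quasiregular: the paper's very first example (Herrlich's space $[0,2]$ with the topology inherited from $({\Bbb R},\tau_{\Bbb Q})$) is H-closed and not quasiregular, and Example~\ref{exe} is a space with an H-closed $\pi$-base that is not quasiregular. Your proof survives only because the theorem's hypothesis is a \emph{quasiregular} H-closed $\pi$-base --- quasiregularity of the closures is assumed, so Lemma~\ref{qr}/Corollary~\ref{equi} applies with no detour through H-closedness. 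If that assumption were dropped, the step producing $\overline{B_{n+1}}\subseteq B_n$ would have no justification, and indeed whether every space with a (not necessarily quasiregular) H-closed $\pi$-base is Baire or Choquet is exactly the part of Question~\ref{alpha} that the paper leaves open.
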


\begin{proof}
	Let $\{U_n:n<\omega\}$ be a family of open dense subsets of $X$ and ${\cal B}$ be a quasiregular H-closed $\pi$-base. We show $\bigcap_{n<\omega}U_n$ is dense in $X$. Let $U$ be a nonempty open set. We show that $U\cap\bigcap_{n<\omega}U_n\not=\emptyset$.
	
	As $U\cap U_0\not=\emptyset$, there exists $B\in{\cal B}$ such that $B\subseteq U\cap U_1$ and $\overline{B}$ is a quasiregular and H-closed subspace. Then there exists an open subset $B_0$ of $\overline{B}$ such that $\overline{B_0}^{\overline{B}}\subseteq B\subseteq U\cap U_0$. The set $B_0\cap(U\cap U_1)$ is an open subset of $\overline{B}$, then, by its quasiregularity, there exists an open subset $B_1$ of $\overline{B}$, such that $\overline{B_1}^{\overline{B}}\subseteq B_0\cap (U\cap U_1)$. Consider the open subset $B_1\cap (U\cap U_2)$ of $\overline{B}$ and continue by induction. Then we obtain a decreasing sequence $\{\overline{B_n}^{\overline{B}}: n<\omega\}$ of closed subsets of $\overline{B}$ such that $\overline{B_n}^{\overline{B}}\subseteq U\cap U_n$. Since $\overline{B}$ is H-closed we have that $\emptyset\not=\bigcap_{n<\omega}\overline{B_n}^{\overline{B}}\subseteq (\bigcap_{n<\omega}U_n)\cap U$ . 
\end{proof}

By Corollary \ref{equi}, we obtain the following corollary.

\begin{corollary}\rm
	Let $X$ be a quasiregular space with an H-closed $\pi$-base. Then $X$ is Baire.
\end{corollary}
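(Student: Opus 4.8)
```latex
\textbf{Proof proposal.} The statement to prove is Theorem~\ref{Baire}: a space $X$ with a quasiregular H-closed $\pi$-base is Baire. The plan is to run the Banach--Mazur-type construction \emph{inside the closure of a single well-chosen $\pi$-base element}, exploiting the fact that the closure is itself a quasiregular H-closed space, so that its own quasiregularity lets us shrink to closed-in-$\overline{B}$ sets and its H-closedness supplies the nonempty intersection at the end (recall that in an H-closed space a decreasing chain of nonempty closed sets need not have nonempty intersection in general, so the key is to arrange the $\overline{B_n}^{\overline B}$ to be \emph{regular closed} in $\overline B$, since regular-closed subspaces of H-closed spaces are H-closed and an H-closed space is compact in its semiregularization). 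Fix a countable family $\{U_n:n<\omega\}$ of open dense sets and an arbitrary nonempty open $U$; it suffices to meet $U\cap\bigcap_n U_n$.

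First I would use density of the $\pi$-base $\cal B$ and the fact that $U\cap U_0\neq\emptyset$ to pick $B\in{\cal B}$ with $B\subseteq U\cap U_0$ and $\overline B$ quasiregular and H-closed. From now on all closures denoted $\overline{(\cdot)}^{\overline B}$ are taken in the subspace $\overline B$. The second step is the inductive construction: having an open (in $\overline B$) nonempty set, I intersect it with the next $U\cap U_{n+1}$ — which is open and dense in $\overline B$ because $U_{n+1}$ is dense in $X$ and $\overline B$ has nonempty interior meeting $U$ — and then invoke the quasiregularity of $\overline B$ to find a nonempty open $B_{n+1}$ in $\overline B$ with $\overline{B_{n+1}}^{\overline B}$ contained in that intersection. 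This produces a decreasing sequence of nonempty closed subsets $\overline{B_n}^{\overline B}$ with $\overline{B_n}^{\overline B}\subseteq U\cap U_n$. The third step is to extract a point of $\bigcap_n\overline{B_n}^{\overline B}$: since $\overline B$ is H-closed, the open filter base $\{B_n:n<\omega\}$ on $\overline B$ (it is decreasing, hence a base) has nonempty adherence, i.e.\ $\bigcap_n\overline{B_n}^{\overline B}\neq\emptyset$, and any point there lies in $U\cap\bigcap_n U_n$.

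I expect the main obstacle to be the density of the sets $U\cap U_n$ as subsets of $\overline B$ rather than of $X$ — one must check that $U_n$ dense in $X$ forces $U_n$ (and hence $U\cap U_n$, restricted appropriately) dense in the subspace $\overline B$, which is where the choice $B\subseteq U$ and the relation $\overline B=\overline{\operatorname{int}\overline B}$ are used; a careless argument here would only give density in $B$ rather than in all of $\overline B$, and the quasiregularity step of $\overline B$ needs an open set of $\overline B$, not merely of $B$. The secondary subtlety is the final adherence step: I must be sure that the $B_n$ form a genuine \emph{open} filter base on $\overline B$ and that "H-closed $\Rightarrow$ every open filter base has nonempty adherence" is exactly the characterization of H-closedness recalled in Section~2, so that no regularity of $\overline B$ is needed at this last stage. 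Everything else — the base case, the bookkeeping of which $U_n$ is used at stage $n$, monotonicity of the chain — is routine. Finally, the stated corollary is immediate from Corollary~\ref{equi}: if $X$ is quasiregular and has an H-closed $\pi$-base ${\cal B}$, then (replacing ${\cal B}$ by $\{B\in{\cal B}:\overline B\text{ is H-closed}\}$ and using that closures of elements of a $\pi$-base of a quasiregular space are quasiregular) $X$ has a quasiregular H-closed $\pi$-base, so Theorem~\ref{Baire} applies.
```
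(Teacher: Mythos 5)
Your proposal is correct and follows essentially the same route as the paper: the Banach--Mazur-style construction inside $\overline{B}$ using quasiregularity of $\overline B$ to shrink to closures and H-closedness to get nonempty adherence of the open filter base $\{B_n\}$ is exactly the paper's proof of Theorem~\ref{Baire}, and the reduction of the corollary to that theorem via Corollary~\ref{equi} (i.e., regular-closed subspaces of a quasiregular space are quasiregular, so the H-closed $\pi$-base is automatically a quasiregular H-closed $\pi$-base) is also the paper's argument. Your explicit attention to why $U\cap U_{n+1}$ meets each $B_n$ and to why the final intersection is nonempty (adherence of an open filter base, not an arbitrary chain of closed sets) is a welcome clarification but not a different method.
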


\bigskip

In \cite{BBR} the authors showed that if $X$ is a Hausdorff Baire space with a rank 2-diagonal, then $|X|\leq wL(X)^
\omega$ and if $X$ is a Hausdorff Baire space with a $G_\delta$-diagonal, then $d(X) \leq wL(X)^\omega$. Therefore, as corollaries of these results and
Theorem \ref{Baire} we have the following:
\begin{corollary}\rm
	Let $X$ be a space with a quasiregular H-closed $\pi$-base.
	\begin{itemize}
		\item[(a)] If $X$ has a $G_\delta$-diagonal, then $d(X) \leq wL(X)^\omega$.
		\item[(b)] If $X$ has a rank 2-diagonal, then $|X|\leq wL(X)^\omega$.
	\end{itemize}	
\end{corollary}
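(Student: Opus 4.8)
The plan is to observe that this corollary is a direct chaining of two already-available facts, so the entire argument consists of verifying that the hypotheses of the cited results from \cite{BBR} are met. The first step is to invoke Theorem \ref{Baire}: since $X$ has a quasiregular H-closed $\pi$-base, $X$ is a Baire space. This is the only place where the structural assumption on $X$ is used; everything afterward relies solely on Baireness together with the diagonal hypothesis.

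For part (a), I would note that $X$ is Hausdorff (by the standing convention that \lq\lq space\rq\rq\ means Hausdorff), $X$ is Baire by Theorem \ref{Baire}, and $X$ carries a $G_\delta$-diagonal by hypothesis. These are precisely the hypotheses of the result of \cite{BBR} stating that a Hausdorff Baire space with a $G_\delta$-diagonal satisfies $d(X)\leq wL(X)^\omega$, so the conclusion $d(X)\leq wL(X)^\omega$ follows immediately. For part (b), the argument is identical in shape: $X$ is a Hausdorff Baire space, now with a rank $2$-diagonal, and the corresponding result of \cite{BBR} yields $|X|\leq wL(X)^\omega$ directly.

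Since the deductive content is entirely contained in Theorem \ref{Baire} and in the two theorems quoted from \cite{BBR}, there is no genuine obstacle to overcome here; the \lq\lq hard part\rq\rq\ has already been discharged in the proof of Theorem \ref{Baire}, and what remains is purely a matter of checking that the two external theorems apply verbatim. The only point worth stating explicitly, to keep the corollary self-contained, is that the Baire property is exactly the bridge supplied by Theorem \ref{Baire} that converts the $\pi$-base hypothesis into the form required by \cite{BBR}. I would therefore write the proof as a short two-sentence deduction, one sentence per item, each opening with \lq\lq By Theorem \ref{Baire}, $X$ is Baire\rq\rq\ and then citing the appropriate inequality from \cite{BBR}.
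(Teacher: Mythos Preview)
Your proposal is correct and matches the paper's approach exactly: the corollary is obtained by applying Theorem \ref{Baire} to conclude that $X$ is Baire, and then invoking the two results from \cite{BBR} for Hausdorff Baire spaces with a $G_\delta$-diagonal (respectively, a rank 2-diagonal). There is nothing to add.
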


{\bf Acknowledgement:} The author express his sincere gratitude to N. Carlson and M. Bonanzinga for their useful suggestions and inspiring discussions. He would also like to thank the \lq\lq National Group for Algebric and Geometric Structures, and their Applications\rq\rq (GNSAGA-INdAM) for their invaluable support throughout the course of this research.

\end{document}